\DeclareMathOperator{\cl}{cl}
\DeclareMathOperator{\andd}{and}
\DeclareMathOperator{\Lim}{Lim}
\newtheoremstyle{mystyle}
  {}
  {}
  {\itshape}
  {}
  {\bfseries}
  {}
  {.5em}
  {}
\theoremstyle{mystyle}
\newtheorem{theorem}{Theorem}
\newtheorem{problem}{Problem}
\newtheorem{lemma}{Lemma}
\newtheorem{proposition}{Proposition}
\newtheorem{corollary}{Corollary}
\titleformat{\section}{\small\bfseries}{\thesection}{1em}{}
\titleformat{\subsection}{\small\bfseries}{\thesubsection}{1em}{}
\theoremstyle{definition}
\newtheorem{definition}{Definition}
\numberwithin{equation}{section}
\begin{document}

\title{$\Psi$-spaces and semi-proximality}
\author[K. Almontashery]{Khulod Almontashery} 

\address{Department of Mathematics and Statistics\\ University of Jeddah\\ Jeddah\\ Saudi Arabia}
\email{kalmontashri@uj.edu.sa}

\author[V. Rodrigues]{Vinicius de Oliveira Rodrigues}
\author[P.J. Szeptycki]{Paul J. Szeptycki}

\address{Department of Mathematics, University of São Paulo\\ São Paulo, São Carlos\\ Brazil}
\email{vinicius.rodrigues@icmc.usp.br}

\address{Department of Mathematics and Statistics\\ York University\\ Toronto, ON M3J 1P3\\Canada}
\email{szeptyck@yorku.ca}
\begin{abstract}
    We discuss the proximal game and semi-proximality in $\Psi$-spaces of almost disjoint families over an infinite countable set and $\Psi$-spaces of ladder systems on $\omega_1$. We show that a semi-proximal almost disjoint families must be nowhere MAD, anti-Luzin and characterize semi-proximality for a class of ${\mathbb R}$-embeddable almost disjoint families.  We show that a $\Psi$-spaces defined from a uniformizable ladder system is semi-proximal and a $\Psi$-space defined on a $\clubsuit^*$ sequence is not semi-proximal. Thus the existence of non-semi-proximal $\Psi$-space over a ladder system is independent of ZFC.
\end{abstract}
\subjclass[2020]{Primary: 54D15; 54D80  Secondary: 03E75; 03E05; 54D15; 54D80}

\keywords{Proximal spaces, Isbell-Mrówka spaces, Ladder system spaces, Normality.}

\maketitle
\section{Introduction}
In \cite{bell2014infinite}, Bell introduced the proximal game, an infinite two-player game played on a uniform space. The existence (or lack of) winning strategies for the players have several topological consequences for the underlying topological spaces. J. Bell invented the game to prove that certain uniform box products were normal and she found wider applications, e.g., giving an elegant alternative proof of M. E. Rudin's celebrated result that a $\Sigma$-product of metrizable spaces is collectionwise normal and countably paracompact. 

In \cite{nyikos2014proximal}, Nyikos used the proximal game to define the concept of semi-proximality. In this paper, we study semi-proximality in the realm of $\Psi$-spaces of almost disjoint families over a countable set, and those generated by ladder systems.

In this section, we review the relevant definitions and history, and summarize the new results.

\subsection{Uniform spaces}
Before we discuss the proximal game, we fix the notation for uniformities and recall some well-known facts about them. This subsection is, in no way, an introduction to this subject. For the basics on uniformities, we refer to \cite{Engelking1989}.

Given a set $X$, $\Delta_X$ is its diagonal, $\{(x, x): x \in X\}\subseteq X^2$. If $X$ is clear from the context, we simply write $\Delta$. 

If $U, V\subseteq X^2$, we let $U^{-1}=\{(y, x): (x, y)\in U\}$ and $U\circ V=\{(x, z): \exists y\, (x, y)\in U \, (y, z) \in V\}$. If $U^{-1}=U$, $U$ is said to be symmetric. We define $U^2=U\circ U$, and recursively define $U^n$ for $n \geq 1$ so that $U^{n+1}=U^n\circ U$.

\begin{definition} A uniform space is a pair $(X, \mathbb U)$ such that, for all $U, U_0, U_1\in \mathbb U$:

\begin{enumerate}
\item $\Delta_X\subseteq U\subseteq X^2$
\item $U^{-1}\in \mathbb U$
 \item There exists $V \in \mathbb U$ such that $V^2\subseteq U$.
\item $U_0\cap U_1\in \mathbb U$.
\item For all $V\subseteq X^2$, if $U\subseteq V$ then $V\in \mathbb U$.
\item[]

\end{enumerate}
The set $\mathbb U$ is then called a uniformity (on $X$). Each element of $\mathbb U$ is said to be an \textit{entourage}.
\end{definition}
If $\mathbb B\subseteq \mathbb U$ satisfies properties 1. to 4., we say it is a basis for $\mathbb U$. In that case, $\mathbb B_\uparrow$, which is defined as $\{U\subseteq X^2: \exists V\in \mathbb B\, V\subseteq U\}$, coincides with $\mathbb U$.

A uniformity $\mathbb U$ on $X$ naturally induces a topology on $X$, which we call $\tau_{\mathbb U}$. If $x \in X$ and $U \in \mathbb U$, we define $U[x]=\{y \in X: (x, y)\in U\}$. Notice that $x \in X$. Then $A\subseteq X$ is in $\mathbb \tau_{\mathbb U}$ if and only if, for every $x \in A$ there exists $U \in \mathcal U$ such that $U[x]\subseteq A$. $\tau_{\mathbb U}$ is easily seen to be a topology on $X$. The sets $U[x]$ need not to be open, however, $x$ is always in the interior of $U[x]$. Clearly, if $\mathbb B$ is a basis of $\mathbb U$, then for every $x \in X$, $\{U[x]:x \in \mathbb B\}$ is a fundamental neighborhood system at $x$. It is easily verified that $\tau_{\mathbb U}$ is Hausdorff if and only if $\Delta_X=\bigcap \mathbb U$. Moreover, the following is well-known.

\begin{lemma}Let $(X, \mathbb U)$ be a uniform space. Then:
\begin{enumerate}
    \item The collection of all symmetric, open (in $X^2$) entourages is a basis.
    \item If $U\in \mathbb U$ is open, then for every $x \in X$, $U[x]$ is open.
\end{enumerate}
\end{lemma}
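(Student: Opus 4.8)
The plan is to handle the two items separately: (2) is essentially formal, while (1) rests on the classical iterated refinement of entourages, which is the only step I expect to require care. Throughout, ``open in $X^2$'' is read as open in the product topology $\tau_{\mathbb U}\times\tau_{\mathbb U}$ (equivalently, in the topology of the product uniformity on $X\times X$).

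For (2), fix $x\in X$ and let $y\in U[x]$, so $(x,y)\in U$. Since $U$ is open in the product topology, choose $\tau_{\mathbb U}$-open sets $A$ and $B$ with $(x,y)\in A\times B\subseteq U$. Then for every $z\in B$ we have $(x,z)\in A\times B\subseteq U$, i.e.\ $z\in U[x]$; hence $B$ is a $\tau_{\mathbb U}$-open neighbourhood of $y$ inside $U[x]$. As $y\in U[x]$ was arbitrary, $U[x]\in\tau_{\mathbb U}$.

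For (1), let $\mathbb B$ be the collection of symmetric entourages open in $X^2$. I would first record the routine facts: for $U_0,U_1\in\mathbb B$, the set $U_0\cap U_1$ is symmetric, open in $X^2$, and an entourage by axiom (4), so $U_0\cap U_1\in\mathbb B$; and $U^{-1}=U\in\mathbb B$ for $U\in\mathbb B$. The substantive claim, and the main obstacle, is that every $U\in\mathbb U$ contains a member of $\mathbb B$. Granting this, one also gets, for any $U\in\mathbb B$, a $V\in\mathbb B$ with $V^2\subseteq U$ (apply the claim to a $W\in\mathbb U$ with $W^2\subseteq U$ furnished by axiom (3), obtaining $V\in\mathbb B$ with $V\subseteq W$, whence $V^2\subseteq W^2\subseteq U$); moreover the claim gives $\mathbb U\subseteq\mathbb B_\uparrow$, while axiom (5) gives the reverse inclusion, so $\mathbb B_\uparrow=\mathbb U$ and $\mathbb B$ is a basis.

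To prove the claim, fix $U\in\mathbb U$; replacing $U$ by $U\cap U^{-1}$ (an entourage by (2) and (4)) we may assume $U$ is symmetric. Applying axiom (3) twice and symmetrizing, produce a symmetric entourage $V$ with $V\circ V\circ V\subseteq U$: take $W$ with $W^2\subseteq U$, then $W'$ with $(W')^2\subseteq W$ (so $(W')^3\subseteq(W')^4=((W')^2)^2\subseteq W^2\subseteq U$), and set $V=W'\cap(W')^{-1}$. I claim $V\subseteq\operatorname{int}_{X^2}(U)$. Indeed, given $(x,y)\in V$, for any $x'\in V[x]$ and $y'\in V[y]$ we have $(x',x),(x,y),(y,y')\in V$ (using symmetry of $V$ for the first), hence $(x',y')\in V\circ V\circ V\subseteq U$; since $x\in\operatorname{int}_X(V[x])$ and $y\in\operatorname{int}_X(V[y])$ by the fact recalled just before the lemma, $\operatorname{int}_X(V[x])\times\operatorname{int}_X(V[y])$ is an open subset of $X^2$ containing $(x,y)$ and contained in $U$, so $(x,y)\in\operatorname{int}_{X^2}(U)$. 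Thus $V\subseteq\operatorname{int}_{X^2}(U)$; by axiom (5), $\operatorname{int}_{X^2}(U)$ is an entourage, it is open by definition, it is contained in $U$, and it is symmetric because the flip $(a,b)\mapsto(b,a)$ is a self-homeomorphism of $X^2$ fixing the symmetric set $U$ and hence fixing its interior. Hence $\operatorname{int}_{X^2}(U)\in\mathbb B$ and $\operatorname{int}_{X^2}(U)\subseteq U$, which is the claim. Everything here is a chase through the uniformity axioms; the one genuinely topological point is that $V^3\subseteq U$ is what lets one replace the single point $y$ by the neighbourhood $V[y]$, which is precisely the extra refinement one pays for compared with the usual ``$V^2\subseteq U$'' argument.
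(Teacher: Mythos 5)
Your proof is correct. The paper does not actually prove this lemma---it is stated without proof as a well-known fact (with \cite{Engelking1989} as the background reference)---so there is no argument of the authors' to compare against; what you give is the standard textbook derivation, and the two delicate points (using $\Delta\subseteq W'$ to get $(W')^3\subseteq(W')^4$, and paying for the extra refinement $V^3\subseteq U$ so that the single point $y$ can be fattened to the neighbourhood $\operatorname{int}_X(V[y])$) are both handled correctly.
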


We will often use the following basic lemma.

\begin{lemma}
    Let  $(X, \mathbb U)$ be a uniform space. If $U \in \mathbb U$ is symmetric and $a \in X$, then $\overline{U}[a]\subseteq 2U[a]$.
\end{lemma}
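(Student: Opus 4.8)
The plan is to read $\overline{U}[a]$ as the closure of the basic neighborhood $U[a]$ in the topology $\tau_{\mathbb U}$ (and $2U$ as $U\circ U = U^2$), and then to extract a witness for membership in $2U[a]$ directly from the two facts recalled just above: that $U[x]$ is a $\tau_{\mathbb U}$-neighborhood of $x$ for every $x\in X$, and that $U$ is symmetric. So one fixes $y\in\overline{U[a]}$ and aims to produce $z\in X$ with $(a,z)\in U$ and $(z,y)\in U$, which is exactly what $y\in 2U[a]$ asserts.

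Concretely, since $U\in\mathbb U$, the point $y$ lies in the interior of $U[y]$ (this was observed just before the statement), so $U[y]$ is a neighborhood of $y$; as $y$ belongs to the closure of $U[a]$, this neighborhood must meet $U[a]$, and we pick $z\in U[y]\cap U[a]$. Then $z\in U[a]$ gives $(a,z)\in U$, and $z\in U[y]$ gives $(y,z)\in U$, hence $(z,y)\in U^{-1}=U$ by symmetry; composing the two, $(a,y)\in U\circ U = 2U$, that is, $y\in 2U[a]$. This is the whole argument.

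I do not foresee any genuine obstacle here: the statement is essentially a two-line consequence of symmetry together with the definition of $\tau_{\mathbb U}$, the only insight being that the correct neighborhood of $y$ to test against ``$y\in\overline{U[a]}$'' is $U[y]$ itself, and that symmetry of $U$ is precisely what converts the pair $(y,z)$ into the $(z,y)$ needed for the composition. The one point that needs a (completely routine) justification is that $U[y]$ really is a neighborhood of $y$ and not merely a subset containing it, and this follows at once from the definition of the uniform topology and uses no symmetry. For the record, if one instead reads $\overline{U}$ as the closure of $U$ inside $X^2$, running the analogous computation on a basic neighborhood $V[a]\times V[y]$ of a point $(a,y)\in\overline{U}$ only yields $(a,y)\in U^3$ in general, so the section $\overline{U[a]}$ is the object the lemma is really about.
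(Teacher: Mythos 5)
Your argument is exactly the paper's proof: take $y\in\overline{U[a]}$, use that $U[y]$ is a neighborhood of $y$ to find $z\in U[y]\cap U[a]$, and apply symmetry to compose $(a,z)$ and $(z,y)$ into $(a,y)\in 2U$. The reading of $\overline{U}[a]$ as the closure of $U[a]$ is also the one the paper intends, so there is nothing to add.
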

\begin{proof}
    Let $x \in \overline{V}[a]$. Then $V[x]\cap V[a]\neq \emptyset$, thus there exists $z \in X$ such that $(x, z), (a, z)\in V$. As $V$ is symmetric, $(x, a)\in 2V$, so $x \in 2V[a]$.
\end{proof}

A topological space is said to be \textit{uniformizable} if it admits a uniformity that generates its topology. It is well known (see e.g., \cite{Engelking1989}) that a topological space is uniformizable if, and only if it is completely regular, and that such uniformity is unique if, and only if it is compact.

 A uniform space $(X, {\mathbb U})$ is said to be totally bounded if for every $U \in \mathbb U$, there exists a finite $F\subseteq X$ such that $X=\bigcup_{x \in F}U[x]$. If $U$ generates a Hausdorff topology, $(X, {\mathbb U})$ is totally boundeded if and only if there is a compactification of $(X, \tau_{\mathbb U})$ such that ${\mathbb U}$ is the sub-uniformity inherited from the compactification \cite{Engelking1989}. 

\subsection{The proximal game}
The proximal game was introduced in \cite{bell2014infinite}. It is played in a uniform space $(X, \mathbb U)$ by two players: \textbf{I}, also called \textbf{entourage}, and \textbf{II}, also called \textbf{point}. The game has $\omega$ innings. At inning $0$, \textbf{I} plays some $U_0 \in \mathbb U$ and \textbf{II} responds with some $x_0 \in X$. At inning $n+1$, \textbf{I} plays some $\mathbb U_{n+1}\subseteq U_n$ in $\mathbb U$ and \textbf{II} plays some $x_{n+1}\in U_n[x_n]$. After $\omega$ steps, we say that \textbf{I} wins if, and only if either $(x_n: n \in \omega)$ converges or $\bigcap_{n \in \omega}U_n[x_n]=\emptyset$. Otherwise, \textbf{II} wins.

Formally:

\begin{definition}
    Let $(X, \mathbb U)$ be a uniform space and $\mathbb B$ be a basis for $\mathbb U$.
    \begin{itemize}
        \item A run of the proximal game on $(X, \mathbb B)$ is a sequence $((U_n, x_n): n \in \omega)$ such that, for every $n \in \omega$:
        \begin{itemize}
            \item $U_n \in \mathbb B$, $x_n \in \mathbb X$,
            \item $U_{n+1}\subseteq U_n$,
            \item $x_{n+1} \in U_n[x_n]$
        \end{itemize}
        \item A strategy for \textbf{I} is a function $\sigma: X^{<\omega}\rightarrow \mathbb B$ such that for every $n \in \omega$ and $\vec x \in X^{n+1}$, we have $\sigma(\vec x)\subseteq \sigma(\vec x|n)$.
        \item We say that a run of the proximal game $((U_n, x_n): n \in \omega)$ follows a strategy for \textbf{I}, $\sigma$, if for every $n \in \omega$, $U_n=\sigma(x_i: i<n)$.
        \item A strategy for \textbf{II} is a function $\sigma: \mathbb B^{<\omega}\rightarrow X$ such that for every $n \in \omega$ and $\vec U\in \mathbb B^{n+2}$, $\sigma(\vec U)\in \vec U_n[\sigma(\vec U|_{n+1})]$.
        \item We say that a run of the proximal game $((U_n, x_n): n \in \omega)$ follows a strategy for \textbf{I} (respectively, \textbf{I}), $\sigma$, if for every $n \in \omega$, $U_n=\sigma(x_i: i<n)$ (respectively, $x_n=\sigma(U_i: i\leq n)$).        
         \item We say that a strategy for \textbf{I} (respectively, \textbf{II}) is a winning strategy if \textbf{I} (respectively, \textbf{II}) wins every game which follows the strategy.
         \item We say that the uniformity $(X, \mathbb U)$ is proximal if there exists a winning strategy on $(X, \mathbb U)$.
         \item We say that the uniformity $(X, \mathbb U)$ is semi-proximal if there is no winning strategy for \textbf{II} in the proximal game for $(X, \mathbb U)$.       
    \end{itemize}
    \end{definition}

    Some technical notes: our definition of strategy does not exclude from its domain invalid partial games, e.g. a strategy for \textbf{II} is defined for all pairs $(U_0, U_1)\in \mathbb B^2$, even those for which $U_1 \not \subseteq U_0$. 
    
    We did not exclude such sequences from the domains of the strategies because that would make their definitions much more cumbersome. However, to verify that a strategy is winning, we only need to care about ``valid partial plays'' as we only need to look at valid games which follow the strategies. Thus, in a concrete context, when defining a strategy one may only care about the ``valid partial games'', and define it for ``invalid partial plays'' arbitrarily. 

An open cover $\mathcal{U}$ of $X$ is called \textit{normal} if there exists a sequence of open covers $\{\mathcal{V}_n : n \in \omega\}$ such that $\mathcal{V}_0 = \mathcal{U}$ and each $\mathcal{V}_n$ is a star refinement of $\mathcal{V}_{n-1}$. Uniformities can be described equivalently using normal covers. Specifically, if $\mathcal{A}$ is the corresponding family of normal covers corresponding to the uniformity ${\mathfrak U}$, the proximal game can be reformulated as follows:  

\begin{itemize}
    \item In inning $0$, Player I selects an open cover $A_0 \in \mathcal{A}$, and Player II chooses a point $x_0 \in X$.  
    \item In inning $n+1$, Player I picks an $A_{n+1} \in \mathcal{A}$ that refines $A_n$, and Player II chooses a point $x_{n+1} \in \operatorname{St}(x_n, A_n)$, where $\operatorname{St}(x_n, A_n) = \bigcup \{U \in A_n : x_n \in U\}$.
\end{itemize}  

Player I wins if either  $
\bigcap_{n < \omega} \operatorname{St}(x_n, A_n) = \emptyset
$
or the sequence $(x_n : n < \omega)$ converges.

    Proximal and semi-proximal topological spaces are defined as follows:

    \begin{definition}
        A topological space $(X, \tau)$ is said to be proximal if there exists a uniformity $\mathbb U$ for $X$ such that $(X, {\mathbb U})$ is proximal and $\tau=\tau_\mathbb U$ (such a uniformity is said to be {\em compatible}). If, additionally, $\mathbb U$ is totally bounded, $(X, \tau)$ is said to be \textbf{totally proximal}.
        
        Similarly, a topological space $(X, \tau)$ is said to be semi-proximal if there exists a compatible uniformity $\mathbb U$ for $X$ such that $(X, \mathbb U)$ is semi-proximal. If, additionally, $\mathbb U$ is totally bounded, $(X, \tau)$ is said to be \textbf{totally semi-proximal}.
    \end{definition}

We refer the reader to \cite{KhThesis} for more about total proximality and total semi-proximality.

    J.\ Bell proved that every proximal space is collectionwise normal, countably paracompact and a $W$-space (thus, $\alpha_2$ and Fréchet) \cite{bell2014infinite}. She also proved that the uniformities induced by metrics are proximal and that proximality is preserved by $\Sigma$-products. These results, together, give an alternative proof of Rudin's celebrated theorem stating that every $\Sigma$-product of metrizable spaces is collectionwise normal and countably paracompact.

    Semi-proximality was subsequently defined by Nyikos in \cite{nyikos2014proximal}. He proved that semi-proximal spaces are $w$-spaces (thus, $\alpha_2$ and Fréchet) and asked whether semi-proximal spaces are normal. This question was answered in the negative in \cite{AS} - we will comment further on this in the next subsection.

    It is easily seen that the proximal game on a given uniformity $\mathbb U$ is equivalent when Player I is restricted to playing from a uniform base.


 \subsection{Almost disjoint families and ladder systems}
    An almost disjoint family on a infinite countable set $N$ is a collection $\mathcal A$ of infinite subsets of $N$ such that for every two distinct $a, b \in \mathcal A$, $a\cap b$ is finite. It is said to be a maximal almost disjoint family (MAD family) if it is maximal with respect to containment among all almost disjoint families over $N$. The free ideal generated by $\mathcal A$ is $\mathcal I(\mathcal A)=\{X\subseteq N: \exists \mathcal A' \in [\mathcal A]^{<\omega}\, X\subseteq^* \bigcup \mathcal A\}$. We define $\mathcal I^+(\mathcal A)=\mathcal P(N)\setminus \mathcal I(\mathcal A)$.

    Given an almost disjoint family $\mathcal A$ over $N$, where $N$ is infinite and countable and $N\cap [N]^\omega=\emptyset$, one defines the Isbell-Mrówka space $\Psi(\mathcal A)$, also known as the $\Psi$-space of $\mathcal A$:  $\Psi({\mathcal A})=N\cup \mathcal A$ topologized so $N$ is open and discrete, and for every $a \in \mathcal A$, the set $\{\{a\}\cup a\setminus F: F\in [N]^{<\omega}\}$ is a local basis at $a$. This is the strongest topology which makes $N$ open and discrete and for which every $a \in \mathcal A$ is the range of a sequence converging to the point $a$.

    Isbell-Mrówka spaces are Tychonoff, locally compact, zero-dimensional, separable and Moore. Combinatorial properties of $\mathcal A$ are closely related to topological properties of $\Psi(\mathcal A)$. For instance, $\mathcal A$ is maximal if and only if $\Psi(\mathcal A)$ is pseudocompact. For that reason, the construction of almost disjoint families with specific combinatorial properties have often been employed in the construction of examples in general topology.
    For instance, the study of normality in Isbell-Mrówka spaces is related to Moore's problem on the existence of a normal non-metrizable Moore space - which is known as the Normal Moore Space Problem \cite{HRUSAK2007111}. In fact, the existence of a normal $\Psi$-space is equivalent to the existence of a normal non-metrizable separable Moore space \cite{tall1977set}. We refer to \cite{Hrusak2014} as a survey on $\Psi$-spaces.

    It is easy to classify the proximal $\Psi$-spaces: A $\Psi$-space is metrizable if, and only if it is countable and since all $\Psi$-spaces are regular and Moore, $\Psi$-spaces are collectionwise normal if and only if they are countable. As proximal spaces are collectionwise normal, a $\Psi$-space is proximal if and only if it is countable. Moreover, the one-point compactification of countable $\Psi$-spaces is also metrizable, so countability also characterizes their total proximality.

    The problem of characterizing the semi-proximal $\Psi$-spaces is more complicated, and is the main problem we consider in this paper. As as shorthand, we define:

    \begin{definition}
        We say that an almost disjoint family is (totally) semi-proximal if its $\Psi$-space is (totally) semi-proximal.
    \end{definition}

A particularly important class of almost disjoint families are those obtained as a set of branches in the trees $2^{<\omega}$ or $\omega^{<\omega}$. If $x \in 2^\omega$, we define $a_x=\{x|n: n \in \omega\}$, and, for $X\subseteq 2^\omega$, $\mathcal A_X=\{a_x: x \in X\}$. The set $\mathcal A_X$ is easily seen to be an almost disjoint family on $2^{<\omega}$. An almost disjoint family of this type is said to be an almost disjoint family of (binary) branches. The topological properties of $X$ often characterize topological properties of $\Psi(\mathcal A_X)$. For instance, the former is a $Q$-set (respectively, $\sigma$-set, $\lambda$-set, weak $\lambda$-set) if, and only if the latter is normal (respectively, almost-normal, pseudonormal, strongly $\aleph_0$-separated, see e.g. \cite{de2023special}). 

In \cite{AS}, the first and third authors proved that a branching family $\mathcal A_X$ is semi-proximal in case $X$ does not contain a copy of the Cantor set and this was used to give a ZFC example of a semi-proximal space which is not normal. In Section 2 we establish the converse, thus, characterizing the semi-proximality of almost disjoint families of branches. We also generalize the preceding implication to $\mathbb R$-embeddable almost disjoint families.

        In Section 3, we give an insight into how close semi-proximal $\Psi$-spaces are to being normal. It is known that if $\Psi(\mathcal A)$ is normal, then $\mathcal A$ is nowhere MAD and anti-Luzin. We will show that semi-proximal almost disjoint families are also nowhere mad and anti-Luzin.

 An important class of examples of almost disjoint families of countable infinite subsets of $\omega_1$ are the so-called ladder systems. A ladder system is a family $\mathcal L=(L_\alpha: \alpha\in \Lim)$, where $\Lim$ is the set of all countable limit ordinals in $\omega_1$, and $L_\alpha$ is a cofinal subset of $\alpha$ of type $\omega$. 
 The concept of uniformizability of a ladder system (defined below) was introduced by S.\ Shelah in his celebrated solution to Whitehead's problem \cite{Shelah1980}.

Given a ladder system $\mathcal L$, one naturally defines a $\Psi$-space $\Psi(\mathcal L)$: the base space is $(\omega_1\times \{0\})\cup \Lim\times \{1\}$, where $\omega_1\times \{0\}$ is open and discrete, and a local basis for $(\alpha, 1)$ is $(\{(\alpha, 1)\}\cup (L_\alpha\setminus \xi)\times\{0\}: \xi<\alpha\}$. These spaces are also Tychonoff, locally compact and zero-dimensional. As before, we say that $\mathcal L$ is (totally) semi-proximal if $\Psi(\mathcal L)$ is.

In Section 4, we show that the existence of a not semi-proximal ladder system $\mathcal L$ is independent of ZFC by showing that the ones given by $\clubsuit^*$-sequences are not semi-proximal, and those that are uniformizable (which is consistently all of them) are semi-proximal.
        
\section{Semi-proximal almost disjoint families and \texorpdfstring{$\mathbb R$}{R}-embeddability}

As mentioned in the previous section, in\textit{} \cite{AS}, it was proven that $\mathcal A_X$ is semi-proximal in case $X$ does not contain a copy of the Cantor set. We aim to establish the converse.

\begin{proposition} \label{CantorCopy} Let $X\subseteq 2^\omega$. The following are equivalent:

\begin{enumerate}[a)]
    \item $X$ contains no perfect\footnote{$C\subseteq X$ is perfect if and only if it is closed, non-empty and has no isolated points} subsets.
    \item $\mathcal A_X$ is semi-proximal.
    \item $\mathcal A_X$ is totally semi-proximal.
\end{enumerate}
\end{proposition}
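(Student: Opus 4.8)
The plan is to close the cycle (a)$\Rightarrow$(c)$\Rightarrow$(b)$\Rightarrow$(a). Here (c)$\Rightarrow$(b) is immediate, since a totally bounded compatible uniformity witnessing that \textbf{II} has no winning strategy is in particular a compatible one. For (a)$\Rightarrow$(c) I would revisit the argument of \cite{AS} showing that $X$ without a Cantor copy makes $\mathcal A_X$ semi-proximal, and check that the compatible uniformity produced there is (or can be arranged to be) the trace of a compactification of $\Psi(\mathcal A_X)$ built from the embedding $X\subseteq 2^\omega$, hence totally bounded; so $\mathcal A_X$ is even totally semi-proximal when $X$ has no perfect subset. The new content — the converse of \cite{AS} — is (b)$\Rightarrow$(a), which I would prove in contrapositive form: \emph{if $X$ contains a Cantor set $C$, then for every compatible uniformity $\mathbb U$ on $\Psi(\mathcal A_X)$ Player \textbf{II} has a winning strategy in the proximal game}, so $\mathcal A_X$ is not semi-proximal.

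The engine is a pigeonhole lemma with a twist. Given a symmetric open entourage $U$, fix (by the uniform metrization lemma, cf.\ \cite{Engelking1989}) a uniformly continuous pseudometric $\rho_U$ on $\Psi(\mathcal A_X)$ with $\{\rho_U<1\}\subseteq U$; then $U':=\{\rho_U<\tfrac12\}$ satisfies $U'\circ U'\subseteq U$ by the triangle inequality. For $y\in 2^\omega$ put $k_y:=\min\{k:\rho_U(a_y,y|j)<\tfrac12 \text{ for all } j\ge k\}$, a finite number since $\{q:\rho_U(a_y,q)<\tfrac12\}$ is a neighbourhood of $a_y$. The point of working with a pseudometric is that $\rho_U(a_y,y|j)=\lim_i\rho_U(y|i,y|j)$ (as $y|i\to a_y$ and $\rho_U$ is jointly continuous) while $\rho_U(y|i,y|j)$ depends only on a finite initial segment of $y$; hence $y\mapsto\rho_U(a_y,y|j)$ is Baire class $1$ and every set $\{y:k_y\le k\}$ is Borel. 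Consequently, for any entourage $U$ and any uncountable $D\subseteq C$, some $\{y\in D:k_y\le k^*\}$ is uncountable; localising to a node $t$ of length $k^*$, the set $\{y\in D\cap[t]:k_y\le k^*\}$ is an uncountable Borel set and so, by the perfect set property, contains a Cantor set $D'$, on which every $y$ satisfies $a_y\in U'[t]$.

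Now \textbf{II} plays as follows. Set $C_{-1}:=C$. At inning $n$, when \textbf{I} plays $U_n$, apply the lemma to $C_{n-1}$ and $U_n$ to obtain a node $t_n$ and a Cantor set $C_n\subseteq C_{n-1}\cap[t_n]$ with $a_y\in U_n'[t_n]$ for all $y\in C_n$, where $U_n'=\{\rho_{U_n}<\tfrac12\}$; then play $z_n:=a_{y_n}$ for some $y_n\in C_n$ distinct from $z_0,\dots,z_{n-1}$. This is legal: $y_{n-1},y_n\in C_{n-1}$, so $(a_{y_{n-1}},t_{n-1})$ and $(a_{y_n},t_{n-1})$ lie in $U_{n-1}'$, whence $(a_{y_{n-1}},a_{y_n})\in U_{n-1}'\circ U_{n-1}'\subseteq U_{n-1}$, i.e.\ $z_n\in U_{n-1}[z_{n-1}]$. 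The $C_n$ form a decreasing sequence of nonempty compact sets, so fix $x^*\in\bigcap_n C_n$; then $x^*\in C\subseteq X$, and for every $n$ both $a_{y_n}$ and $a_{x^*}$ lie in $U_n'[t_n]$, so $(a_{y_n},a_{x^*})\in U_n$, i.e.\ $a_{x^*}\in\bigcap_n U_n[z_n]$. Finally the $z_n$ are distinct points of $\mathcal A_X$, which is closed and discrete in $\Psi(\mathcal A_X)$, so $(z_n)$ does not converge. Hence every run following this strategy is a win for \textbf{II}.

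The main obstacle is exactly the existence of the target $x^*$: \textbf{I} reveals the entourages one inning at a time and may shrink them adversarially, and the naive ``sets of admissible limit branches'' at stage $n$ need not be closed, so a decreasing sequence of uncountable subsets of $C$ could well have empty intersection. The fix is the combination above — replacing entourages by pseudometric entourages makes those sets Borel, the perfect set property lets one keep them perfect, and compactness of $C$ then forces $\bigcap_n C_n\ne\emptyset$. It is also worth highlighting where the hypothesis bites: if $X$ had no perfect subset it would contain no uncountable Borel (indeed no uncountable analytic) set, so the recursion could not even begin — in line with \cite{AS}, and explaining why ``no perfect subset'' rather than merely ``uncountable'' is the correct dividing line.
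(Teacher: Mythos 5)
Your proof is correct and, in its skeleton, is the same as the paper's: (a)$\Rightarrow$(c) by citing/inspecting \cite{AS}, (c)$\Rightarrow$(b) trivially, and (b)$\Rightarrow$(a) in contrapositive by producing, for an \emph{arbitrary} compatible uniformity, a strategy for \textbf{II} that recursively shrinks $C$ to a decreasing chain of Cantor sets $C_n$ all of whose branches yield points of $\mathcal A_X$ lying in a common $U_n'[t_n]$, plays distinct points $z_n=a_{y_n}$ with $y_n\in C_n$, and obtains $a_{x^*}\in\bigcap_n U_n[z_n]$ for any $x^*\in\bigcap_n C_n\neq\emptyset$ by compactness. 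The one genuine difference is how the extraction step is proved. The paper fixes $W$ with $4W\subseteq U$ and covers the current Cantor set $D$ by the countably many \emph{closed} sets $\{x\in D:\forall n\geq m\ x|n\in W[s]\}$ ($s\in 2^{<\omega}$, $m\in\omega$); Baire's theorem in the compact space $D$ yields one with nonempty interior, and a nonempty relatively open subset of a Cantor set contains a Cantor set --- so nothing beyond Baire category is used. You instead pseudometrize $U$, observe that $y\mapsto\rho_U(a_y,y|j)$ is Baire class $1$, conclude that the level sets of $k_y$ are Borel, and invoke the perfect set property for uncountable Borel sets. Both are sound; yours trades the elementary Baire-category argument for heavier (though standard) descriptive set theory, and in exchange makes explicit why ``no perfect subset'' rather than ``countable'' is the correct dividing line. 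One small point of care: $k_y$ only makes sense for $y\in X$ (otherwise $a_y\notin\Psi(\mathcal A_X)$), so your Borel sets should be read relative to the compact set $C\subseteq X$ --- which is all your recursion actually uses, since every $D$ appearing in it is a closed subset of $C$.
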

\begin{proof}By \cite[Theorem 1]{AS}, if $X$ has no copy of the Cantor set, then $\mathcal A_X$ is totally semi-proximal.  Thus, it remains to show that if $X$ has a perfect set (equivalently, a copy of the Cantor set), then $\Psi(\mathcal A_X)$ is not semi-proximal.

Let $\mathbb U$ be a uniform structure for $\Psi(\mathcal A_X)$.
     Fix $C\subseteq X$ a homeomorphic copy of $2^\omega$. We show that \textbf{II} has a winning strategy $\sigma$ for the proximal game for $(X, \mathbb B)$, where $\mathbb B$ is the collection of all open, symmetric elements of $\mathbb U$.
     
     We construct mappings $\sigma, D: \bigcup_{k\geq 0}\{(U_0, \dots, U_k)\in \mathbb B^{k+1}: U_k\subseteq \dots \subseteq U_0\}\longrightarrow \mathcal P(C)$ by recursion on the length of the sequences, $k$. These mappings will satisfy, for $k\in \omega$ and every $\subseteq$-decreasing $(U_0, \dots, U_{k})\in \mathbb B^{k+1}$:

   \begin{enumerate}[1)]
   \item$D(U_0, \dots, U_{k})\subseteq D(U_0, \dots, U_{k-1})$ if $k>0$.
   \item$D(U_0, \dots, U_{k})\approx 2^\omega$.
   \item $\mathcal A_{D(U_0, \dots, U_{k})}\subseteq U_k[\sigma(U_0, \dots, U_k)]$.
         \item $\sigma(U_0, \dots, U_k) \in \mathcal A_{D(U_0, \dots, U_k)}$.
        \item $\sigma(U_0, \dots, U_k)\neq \sigma(U_0, \dots, U_{k-1})$ if $k>0$.

    \end{enumerate}
    
        We show how to define such a $\sigma$ and a $D$ recursively.\\

        Basis: for $U \in \mathbb B$, fix $W\in \mathbb B$ such that $4W\subseteq  U$. As $2^{<\omega}$ is dense and $W$ is symmetric, $\{W[s]: s \in 2^{<\omega}\}$ covers $\mathcal A_C$. Since each $W[s]$ is open, $C=\bigcup\{\{x\in C:\forall n \geq m\, x|n\in W[s]\}: m \in \omega, s \in 2^{<\omega}\}$. Also, it is easily verified that each $\{x\in C:\forall n \geq m\, x|n\in W[s]\}$ is closed, thus by Baire's Category Theorem there exists $s \in 2^{<\omega}$ and $m \in \omega$ such that $\{x\in C:\forall n \geq m\, x|n\in W[s]\}$ has nonempty interior (in the topology of $C$), thus there exists a copy of the Cantor set $D(U)$ inside of it. It follows that $\mathcal A_{D(U)}\subseteq \cl_{\Psi(\mathcal A_X)} W[s]\subseteq 2W[s]$.
        
        Choose $\sigma(U) \in \mathcal A_{D(U)}$. Then $\mathcal A_{D(U)}\subseteq 2W[s]\subseteq 4W[\sigma(U)]\subseteq U[\sigma(U)]$.\\
        
         Inductive step: Assume $k\geq 0$ and that we have defined $\sigma(U_0, \dots, U_k)$ and $D(U_0, \dots, U_k)$, where $(U_0, \dots, U_k)$ is a decreasing sequence in $\mathbb B$.  Let $U_{k+1}\in \mathbb B$ be such that $\mathcal U_{k+1}\subseteq U_k$. We must define $\sigma(U_0, \dots, U_{k+1})$ and $D(U_0, \dots, U_{k+1})$. This will be similar to the basis step, but we sketch it for the sake of completeness.

                 Fix $W\in \mathbb B$ such that $4W\subseteq  U_{k+1}$. As before, there exists $m\in \omega$ and $s \in 2^{<\omega}$ such that $\{x\in D(U_0, \dots, U_k):\forall n \geq m\, x|n\in W[s]\}$         has nonempty interior, thus there exists a copy of the Cantor set $D=D(U_0, \dots, U_{k+1})$ inside of it. It follows that $\mathcal A_{D}\subseteq \cl_{\Psi(\mathcal A_X)} W[s]\subseteq 2W[s]$.
        
        Choose $\sigma(U_0, \dots, U_{k+1}) \in \mathcal A_{D}\setminus\{\sigma(U_0, \dots, U_{k})\}$. Then $\mathcal A_{D}\subseteq 2W[s]\subseteq 4W[\sigma(U_0, \dots, U_{k+1})]\subseteq U[\sigma(U_0, \dots, U_{k+1})]$.\\
   
        To see that $\sigma$ defines a winning strategy, assume $(U_0, x_0, U_1, x_1, \dots)$ is a play of the game which follows $\sigma$. Then $(x_k)_{k \in \omega}$ does not converge as it is a non eventually constant sequence in the discrete closed set $\mathcal A_X$. It remains to see then that $\bigcap_{k \in \omega} U_k[x_k]\neq \emptyset$. For each $k \in \omega$, let $D_k=D(U_0, \dots, U_k)$.
            
    As $(D_k: k \in \omega)$ is a decreasing sequence of compact subsets of $2^\omega$, thus $\bigcap_{k \in \omega} D_k\neq \emptyset$, thus $\emptyset\neq \bigcap_{k \in \omega} \mathcal A_{D_k}\subseteq \bigcap_{k \in \omega} U_k[x_k]$.

\end{proof}
Next, we give a generalization of the aforementioned result from \cite{AS} to $\mathbb R$-embeddable almost disjoint families:
    \begin{definition}
        An almost disjoint family (over $N$) $\mathcal A$ is said to be $\mathbb R$-embeddable iff there exists a continuous $f:\Psi(\mathcal A)\rightarrow \mathbb R$ such that $f|_\mathcal A$ is injective.
    \end{definition}

    Notice that the existence of such an $f$ is equivalent to the existence of $f:N\rightarrow \mathbb R$ such that for every $a \in A$, $f[a]$ is the range of a real sequence converging to some $x_a$ so that $x_a\neq x_b$ whenever $a, b$ are distinct members of $\mathcal A$. Moreoever, the existence of such an embedding is equivalent to the existence of an embedding that is 1-1 on all of $\Psi(\mathcal A)$ (see \cite{Hrusak2014}).

    Easy examples of $\mathbb R$-embeddable families may be obtained as follows: each $x \in \mathbb R\setminus\mathbb  Q$, let ${q_x(n): n \in \omega}$ be the range a sequence of rationals converging to $x$. In this example, $f:\mathbb Q\rightarrow \mathbb R$ may be chosen as the inclusion.
        
    Equivalently, $\mathbb R$ may be substituted by $2^\omega$ in the preceding definition \cite[Lemma 2]{guzman2021mathbb}. 
    Thus, the mapping $f:2^{<\omega}\rightarrow 2^\omega$ given by $f(s)=s^\frown\mathbf{\vec 0}$ shows that all branching families are $\mathbb R$-embeddable. Given $X\subseteq A$, this induces a continuous map $\bar f:\Psi(\mathcal A_X)\rightarrow 2^\omega$ containing $f$ such that $\bar f(a_x)=x$ for every $x \in X$. However, not all $\mathbb R$-embeddable families are equivalent to a branching family \cite[Remark 5]{guzman2021mathbb}.

    First, we state the following technical lemma, which will also be used in Section 4 where we study semi-proximality of spaces defined from ladder systems We postpone its proof to the end of this section.

 \begin{lemma}\label{technicalMachinery}
    Let $X$ be a topological space, $Y\subseteq X$,  $H:Y\rightarrow 2^\omega$ be an injective function and $(\mathcal A_{k,F}: k \in \omega, F \in [Y]^{<\omega})$ be a family of clopen partitions of $X$  such that:

    \begin{enumerate}[1)]
            \item The uniformity $\mathbb U$ generated by this family of clopen partitions generates the topology of $X$.
        \item For each $F\in [Y]^{<\omega}$ and $k \in \omega$, 
        $
        \mathcal A_{k,F} = \mathcal B_{k,F} \cup \{A_{k, F}^0,A_{k, F}^1\}
        $,
        such that the elements of $\mathcal B_{k,F}$ are proximal subspaces (in the uniformity induced by $\mathbb U$), $F\subseteq \bigcup \mathcal B_{k,F}$ and, for $i\in 2$, $H(A_{k,F}^i\cap Y)\subseteq \{g \in 2^\omega: g(k)=i\}$.

        \item For every $g \in 2^\omega$ and every $\subseteq$-increasing sequence $(F_k: k \in \omega)$ in $[Y]^{<\omega}$, $\bigcap_{k \in \omega}A_{k,F_k}^{g(k)}\subseteq Y$.
    \end{enumerate}

    Then if $H[Y]$ does not contains a copy of the Cantor set, $X$ is semi-proximal
\end{lemma}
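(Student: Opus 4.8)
The plan is to argue by contraposition. Take as basic entourages the finite intersections of the clopen equivalence relations $E_{\mathcal A}=\bigcup\{A\times A:A\in\mathcal A\}$ attached to the partitions $\mathcal A_{k,F}$, and suppose toward a contradiction that Player~II has a winning strategy $\tau$ in the proximal game on $(X,\mathbb U)$; we will build a copy of the Cantor set inside $H[Y]$, contradicting the hypothesis. Throughout, Player~I only plays moves of the form $U_n=\bigcap_{m\le n}E_{\mathcal A_{m,F_m}}$ for a $\subseteq$-increasing sequence $(F_m)$ of finite subsets of $Y$. Since the $E_{\mathcal A}$ are clopen equivalence relations, against such play each $U_n[x_n]$ is clopen, the sequence $(U_n[x_n])_n$ is $\subseteq$-decreasing, and $U_n[x_n]=\bigcap_{m\le n}C_m$, where $C_m$ is the unique cell of $\mathcal A_{m,F_m}$ containing $x_m$; hence $\bigcap_n U_n[x_n]=\bigcap_n C_n$.

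The first step is a trapping lemma: along any partial play following $\tau$ with Player~I playing partition moves as above, Player~II's point $x_n$ never lands in a cell $B\in\mathcal B_{n,F_n}$. Otherwise, from that inning on Player~II is confined to $B$ (the cells nest, and $B$ is a cell of a partition already intersected into Player~I's move); $B$ is clopen and $\mathbb U\restriction B$ is generated by the restrictions of the $\mathcal A_{k,F}$, so $B$ is a proximal subspace, and a winning strategy for Player~I on $B$ lifts to the ambient game — Player~I plays the corresponding intersections of $E_{\mathcal A_{k,F}}$'s, further intersected with the previous move for monotonicity — and, $B$ being clopen, convergence and emptiness of intersections computed in $B$ agree with those computed in $X$; so Player~I would win, contradicting the choice of $\tau$. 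Consequently, in any such partial play each $C_n$ is $A^0_{n,F_n}$ or $A^1_{n,F_n}$; writing $i_n\in 2$ for which one, hypothesis (3) gives $\bigcap_n U_n[x_n]=\bigcap_n A^{i_n}_{n,F_n}\subseteq Y$, and hypothesis (2) together with injectivity of $H$ shows this set is either empty or a single point $y\in Y$ with $H(y)=(i_n)_n$; moreover it is empty whenever that point $y$ belongs to some $F_m$, since then $y\in\bigcup\mathcal B_{m,F_m}$ is disjoint from the cell $A^{i_m}_{m,F_m}$.

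Next we run a Cantor-scheme recursion on the tree of partial plays. For $s\in 2^{<\omega}$ we build a partial play $q_s$ following $\tau$ with Player~I playing partition moves, of length $n_s$, with $q_s\subseteq q_t$ and coherent $F$-sequences for $s\subseteq t$, so that the cell-index sequences recorded by $q_{s0}$ and $q_{s1}$ are $\subseteq$-incomparable (which forces $n_{s0},n_{s1}>n_s$). At a node $q_s$, let $T_s\subseteq 2^{<\omega}$ be the tree of cell-index sequences of all finite continuations of $q_s$; it is pruned since Player~I can always append another partition move. If $T_s$ is not a chain, two incomparable members provide $q_{s0},q_{s1}$, and the recursion proceeds. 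If it never stalls we obtain the full tree; then for each $b\in 2^\omega$ the play $q_b=\bigcup_k q_{b\restriction k}$ follows $\tau$, Player~II wins it, so by the previous paragraph $\bigcap_n U_n[x_n]=\{y_b\}$ for some $y_b\in Y$, and $g_b:=H(y_b)\in H[Y]$ is the cell-index stream of $q_b$. As $n_{b\restriction k}\to\infty$ the map $b\mapsto g_b$ is continuous, and $\subseteq$-incomparability of the splitting sequences makes it injective, so $\{g_b:b\in 2^\omega\}$ is a copy of the Cantor set in $H[Y]$ — contradiction. If instead the recursion stalls at some $s$, then $T_s$ is a pruned chain with arbitrarily long members, hence $T_s=\{g^*\restriction m:m\ge n_s\}$ for a unique $g^*\in 2^\omega$, so every infinite continuation of $q_s$ has cell-index stream $g^*$ and thus $\bigcap_n U_n[x_n]\subseteq Y\cap H^{-1}(g^*)$. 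If $Y\cap H^{-1}(g^*)=\emptyset$, any infinite continuation is a play following $\tau$ that Player~II loses; if $Y\cap H^{-1}(g^*)=\{y^*\}$, Player~I extends $q_s$ by a partition move whose finite set contains $y^*$ and then plays on arbitrarily, still obtaining stream $g^*$ but now with $y^*\notin A^{g^*(n_s)}_{n_s,F_{n_s}}$, so $\bigcap_n U_n[x_n]=\emptyset$ and Player~II again loses. Either way $\tau$ is not winning — contradiction. Hence Player~II has no winning strategy, so $(X,\mathbb U)$, and therefore $X$, is semi-proximal.

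The step I expect to be the main obstacle is the trapping lemma: one must check carefully that a winning strategy for Player~I on the clopen subspace $B$ can be emulated inside the original game using only the permitted partition entourages, keeping Player~I's moves $\subseteq$-decreasing, and that winning the restricted game on $B$ transfers to winning on $X$ — this is exactly where clopenness of the cells of $\mathcal B_{k,F}$ and the fact that the $E_{\mathcal A_{k,F}}$ generate the subspace uniformity on $B$ are used. The remaining ingredients — maintaining coherence of the finite sets $F$ along the tree, and the chain/non-chain dichotomy for $T_s$ — are routine.
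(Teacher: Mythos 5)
Your proposal is correct and follows essentially the same route as the paper's proof: Player I is restricted to the partition entourages with finite modifications, proximality of the $\mathcal B_{k,F}$-cells is used to trap and defeat Player II, hypothesis (3) together with injectivity of $H$ identifies $\bigcap_n U_n[x_n]$ with the single point of $Y$ coded by the cell-index stream, and adjoining already-found points to the finite sets $F$ forces the splitting needed to embed a copy of the Cantor set in $H[Y]$. The only difference is organizational --- the paper always splits by adding $y_s$ to $F$ and comparing the resulting ``all-zeros'' continuations, whereas you split via a chain/non-chain dichotomy on the tree of cell-index streams and defeat $\tau$ directly in the chain case --- but this does not change the substance of the argument.
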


Before we prove Lemma \ref{technicalMachinery}, we show how to apply it to $\Psi$-spaces.
    
\begin{proposition}Let $\mathcal A$ be an almost disjoint family and $\phi: \Psi(\mathcal A)\rightarrow 2^\omega$ be a continuous function such that $\phi|\mathcal A$ is injective. If $\phi[\mathcal A]$ does not contain a perfect set, then it is totally semi-proximal.
\end{proposition}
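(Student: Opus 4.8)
The plan is to deduce this proposition directly from Lemma \ref{technicalMachinery} by manufacturing, from the continuous map $\phi$, the data required by that lemma: the set $Y$, the injection $H$, and the family of clopen partitions $(\mathcal A_{k,F})$. The natural choices are $X = \Psi(\mathcal A)$, $Y = \mathcal A$, and $H = \phi|\mathcal A$, which is injective by hypothesis; and $H[Y] = \phi[\mathcal A]$ contains no copy of the Cantor set exactly because it contains no perfect set (a subset of $2^\omega$ contains a copy of $2^\omega$ iff it contains a nonempty perfect subset). So the whole burden is to build the clopen partitions and verify conditions (1)--(3) of the lemma; once that is done, the lemma yields that $X = \Psi(\mathcal A)$ is semi-proximal, and since the uniformity we build will be generated by countably many clopen finite partitions it is totally bounded, giving total semi-proximality.

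First I would fix, for each $k \in \omega$, a partition of $2^\omega$ into the two clopen pieces $\{g : g(k)=0\}$ and $\{g : g(k)=1\}$, and pull it back along $\phi$ to get two clopen sets $P_k^0, P_k^1$ partitioning $\Psi(\mathcal A)$ (clopen because $\phi$ is continuous and the target pieces are clopen). These are the ``$A$-pieces'' but they are not yet a partition into proximal-plus-two; the point is that each $P_k^i$ may contain infinitely many points of $\mathcal A$, so it is not itself proximal. The fix is to use the finite parameter $F \in [\mathcal A]^{<\omega}$ to carve out the finitely many ladder-points of $F$: for $a \in F$ pick a basic clopen neighbourhood $\{a\} \cup (a \setminus n_a)$ small enough to be contained in one side $P_k^{i_a}$ of the $k$-th partition (possible since $\phi$ is continuous at $a$, so $\phi$ maps a tail of $a$ together with $a$ into the clopen piece containing $\phi(a)$), and make these neighbourhoods pairwise disjoint by enlarging the $n_a$'s. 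Set $\mathcal B_{k,F}$ to be the collection of these finitely many basic neighbourhoods of points of $F$ together with all the remaining singletons $\{t\}$ for $t \in \omega$ (the underlying countable set) not covered by them; each element of $\mathcal B_{k,F}$ is a compact open subspace of $\Psi(\mathcal A)$ — either a convergent sequence with its limit, or a single isolated point — hence metrizable and compact, hence proximal (metric uniformities are proximal by Bell, and a finite/compact space is trivially proximal). Then let $A_{k,F}^i = P_k^i \setminus \bigcup \mathcal B_{k,F}$, which is clopen, and set $\mathcal A_{k,F} = \mathcal B_{k,F} \cup \{A_{k,F}^0, A_{k,F}^1\}$. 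By construction $F \subseteq \bigcup \mathcal B_{k,F}$, and $A_{k,F}^i \subseteq P_k^i$ so $\phi(A_{k,F}^i \cap \mathcal A) \subseteq \{g : g(k) = i\}$; this gives condition (2). For condition (1), I would check that this countable family of finite clopen partitions generates a uniformity compatible with the topology of $\Psi(\mathcal A)$: the $\mathcal B_{k,F}$ pieces, as $k$ and $F$ vary, produce arbitrarily small neighbourhoods of each $a \in \mathcal A$ (taking $F \ni a$ and letting the tail $n_a$ grow), and the singletons isolate the points of $\omega$, which is exactly the local base structure of $\Psi(\mathcal A)$; completely regular spaces are uniformizable and the finest such family generating these neighbourhoods works.

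The step I expect to be the main obstacle is condition (3): for every $g \in 2^\omega$ and every $\subseteq$-increasing sequence $(F_k)$ in $[\mathcal A]^{<\omega}$, one needs $\bigcap_{k} A_{k,F_k}^{g(k)} \subseteq \mathcal A$, i.e. no point of the underlying countable set $\omega$ survives the intersection. A point $t \in \omega$ lies in $A_{k,F_k}^{g(k)}$ only if $t$ is not captured by any neighbourhood in $\mathcal B_{k,F_k}$; so I must arrange that if $t \in a$ for some $a \in \mathcal A$ — which happens for every $t$ lying on at least one branch, but in general a point $t$ may lie on several members of $\mathcal A$ or on none — then once $a$ enters the finite set $F_k$, the neighbourhood of $a$ chosen at stage $k$ either already contains $t$ or, by a bookkeeping argument, is forced to. This is delicate precisely when $t$ belongs to no member of $\mathcal A$: such a $t$ is isolated with $\{t\}$ clopen, and I would simply insist that the singleton partition piece $\{t\}$ appears in $\mathcal B_{k,F}$ for all but finitely many $k$ (enumerate $\omega$ and at stage $k$ throw the first $k$ points of $\omega$ that are not already captured into $\mathcal B_{k,F}$ as singletons), so that $t \notin A_{k,F_k}^{g(k)}$ eventually, hence $t \notin \bigcap_k$. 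For a $t$ that does lie on some $a \in \mathcal A$, once $a \in F_k$ we ensure the chosen tail-neighbourhood $\{a\}\cup(a\setminus n)$ of $a$ has $n$ chosen $\le$ some prescribed bound depending only on $t$'s position, so $t$ is absorbed into $\mathcal B_{k,F_k}$; but if $t$ also lies on $a' \ne a$ and $a'$ never enters the $F_k$'s this is automatic from the first case's bookkeeping applied with $a$. Getting all these finitely-many-exceptions clauses to coexist with the disjointness requirement on the neighbourhoods in $\mathcal B_{k,F}$ is the fiddly heart of the argument; the rest is formal. I would organize it by fixing once and for all an enumeration of $\omega$ and of (a countable cofinal-in-itself family of) basic neighbourhoods, and at each stage $(k,F)$ defining the neighbourhoods of points of $F$ greedily in increasing order of enumeration, always shrinking to stay inside $P_k^{i_a}$, to remain disjoint from previously chosen ones, and to capture the $\le k$-th point of $\omega$ on the relevant branch.
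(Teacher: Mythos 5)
Your proposal is correct and follows essentially the same route as the paper: reduce to Lemma \ref{technicalMachinery} with $Y=\mathcal A$, $H=\phi|_{\mathcal A}$, pull back the coordinate partitions of $2^\omega$, and form $\mathcal B_{k,F}$ from tail neighbourhoods of the members of $F$ together with the singletons of a growing initial segment of $\omega$. The bookkeeping you flag as the ``fiddly heart'' of condition (3) is handled in the paper by simply setting $A_k^i=\{x: \phi(x)(k)=i\}\setminus k$, so every $n\in\omega$ is excluded from all $A_{k,F_k}^{g(k)}$ with $k>n$ and the intersection automatically lands in $\mathcal A$; the extra requirements you impose (tail neighbourhoods contained in one $P_k^{i_a}$, absorption of points lying on branches) are not needed.
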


\begin{proof}
    Let $X=\Psi(\mathcal A)$. We can assume that $N=\omega$. Let $Y=\mathcal A$ and $H=\phi|_Y$.

    For each $n \in \omega$ and $i \in 2$, let $A_k^i=\{x \in X: \phi(x)(k)=i\}\setminus k$. Clearly, each $A_k^i$ is a clopen subset of $X$. For each $k \in \omega$ and $F \in [\mathcal A]^{<\omega}$, fix $k_F\geq k$ so that $$\mathcal B_{k,F}=\{\{a\}\cup (a\setminus k_F: a \in F\}\cup \{\{m\}: m< k_F\}$$ is pairwise disjoint - for clarity, $\mathcal B_{k,\emptyset}=\emptyset$. Observe that each $\mathcal B_{k,F}$ is a finite collection of clopen sets of $X$. For each $k \in \omega$, let $A_{k,F}^i = A_k^i\setminus \bigcup \mathcal B_{k,F}$ and define $\mathcal A_{k,F}=\{A_{k,F}^i: i<2\}\cup \mathcal B_{k,F}$. Each $\mathcal A_{k,F}$ is a clopen partition of $X$. 
    
    Let $\mathbb U$ be the uniformity generated by this family of clopen partitions. This uniformity is easily seen to be totally bounded and to generate the topology of $\Psi(\mathcal A)$. As $H[Y]$ does not contain a copy of the Cantor set, Lemma \ref{technicalMachinery} shows that $\mathbb U$ is semi-proximal.
\end{proof}

We do not know if the converse holds, but we conjecture that an ${\mathbb R}$-embeddable almost disjoint family is semiproximal if and only if the range of some (every) embedding contains no uncoutable perfect subset. 

\begin{problem}
    Assume $\mathcal A$ is an almost disjoint family such that there exists a continuous function $\phi:\Psi(\mathcal A)\rightarrow 2^\omega$ whose range contain no Cantor set. Is $\mathcal A$ semi-proximal?
\end{problem}

This question raises the following natural question: 

\begin{problem}\label{2.6}
    Is there an almost disjoint family $\mathcal A$ for which there exists two embeddings $\phi_0, \phi_1:\Psi(\mathcal A)\rightarrow \mathbb R$ so that $\phi_0[\mathcal A]$ contains a copy of the Cantor set, and $\phi_1[\mathcal A]$ does not?
\end{problem}

We end this section with the proof of Lemma \ref{technicalMachinery}.

\begin{proof}
   Let ${\mathbb U}$ be the uniformity consisting of entourages of the form $\bigcup\{U^2:U\in \mathcal A_{k, F}\}$ where $k \in \omega$ and $F\subseteq Y$ is finite. In what follows we play the open cover version of the proximal game where Player \textbf{I} will play such clopen partitions of the space. Let $\sigma$ be a strategy for Player \textbf{II} in the proximal game on $(X, {\mathbb U})$.
   
We define a counter-strategy for Player \textbf{I}, called  {\it the plain strategy}. The plain strategy for Player \textbf{I} is to play ${\mathcal A}_k = \mathcal{A}_{k,\emptyset}$ at inning $k$. A \textit{finite modification} of the plain strategy for Player \textbf{I} is to play partitions of the form ${\mathcal A}_{k,F}$.

We will now define, for each $g\in 2^\omega$, runs of the game $P_g$ following $\sigma$, so that if Player \textbf{II} wins each of these plays of the game, then a copy of the Cantor set would be embedded in $H[Y]$. We first define a play of the game for the constant $0$ function, $\langle \overline{0}\rangle$:

  Let $F_{\langle \overline{0}\rangle|{k+1}} = \emptyset$, for all $k\geq 0$.  In inning 0,  Player \textbf{I} chooses ${\mathcal A}_{0}$, and Player \textbf{II} chooses $\sigma({\mathcal A}_{0}) = x_{{\langle 0\rangle}}$, which gives an initial play of the game denoted by $P_{\langle0\rangle}$.
Extend it to a full play of the game where Player \textbf{I} uses the unmodified plain strategy and Player \textbf{II} uses $\sigma$ to obtain 
 $
 P_{\langle\overline{ 0}\rangle} = P_{\langle0\rangle} \smallfrown({ \mathcal A}_1, x_{\langle 00\rangle},\ldots,{ \mathcal A}_k, x_{\langle \overline{0}\rangle| k+1},\ldots).
 $
 If either
\begin{enumerate}[(1)]
    \item  there exists $k\geq 0$ such that $x_{\langle \overline{0}\rangle| k+1}\in B$, where $B\in \mathcal{B}_k$, or
    \item $\bigcap_{k\in\omega} \,\,\text{St}( x_{\langle \overline{0}\rangle| k+1}, {{\mathcal A}_k}) =\emptyset$,
\end{enumerate} 
then $\sigma$ can be defeated. Indeed, if (1) occurs, then Player \textbf{II} is forced to play inside a clopen proximal subspace, so Player \textbf{I} would be able to win by using a winning strategy from that point on. So we assume that (1) and (2) fail. Then for each $k$ there is $g_{\langle \overline{0}\rangle| k+1}\in 2^\omega$ such that $x_{\langle \overline{0}\rangle| k+1} \in A^{g_{\langle \overline{0}\rangle| k+1}(k)}_{k, \emptyset}$. Since the plain strategy was employed, there exists $y_{\langle{0}\rangle}\in \bigcap_{k\in\omega} \,\,\text{St}( x_{\langle \overline{0}\rangle| k+1}, {{\mathcal A}_k}) = \bigcap_{k\in\omega} A_{k, \emptyset}^{g_{\langle \overline{0}\rangle| k+1}(k)}.$

  Now we use ${y_{\langle{0}\rangle}}$ to define another play of the game corresponding to the branch $\langle 1\overline{0}\rangle = (1,0,0, \ldots)$ in $2^\omega$. Let  $F_{\langle 1\rangle} = F_{\langle 1\overline{0}\rangle| n+1} = \{{y_{\langle 0\rangle}}\} $, for all $n> 0$.  
In inning 0,  Player \textbf{I} uses the plain strategy modified by $F_{\langle 1\rangle}$  and chooses $ { \mathcal A}_{0, F_{\langle 1\rangle}}$ while Player \textbf{II} chooses $x_{\langle 1\rangle}$. It gives an initial play of the game
$
P_ {\langle 1\rangle} = ({{\mathcal A}_{0, F_{\langle 1\rangle}}},x_{\langle 1\rangle}).
$
We extend it to a full play of the game with Player \textbf{I} using the plain strategy only modified by $F_ {\langle 1\rangle}$ and Player \textbf{II} using $\sigma$:
$$
P_ {\langle 1\overline{ 0}\rangle} =  P_{\langle1\rangle}\smallfrown({{ \mathcal A}_{1, F_{\langle 10\rangle}}}, x_{\langle 10\rangle},\ldots,{ \mathcal A}_{k, F_{\langle 1\overline{0}\rangle| k+1}}, x_{\langle 1\overline{0}\rangle| k+1},\ldots).
$$ 
 Again, if either
\begin{enumerate}[(1)]
    \item  there exists $k\geq 0$ such that $x_{\langle 1\overline{0}\rangle| k+1}\in B$, where $B\in \mathcal{B}_{k, F_{\langle1\overline{0}\rangle| k+1}}$, or
    \item $\bigcap_{k\in\omega} \,\,\text{St}( x_{\langle 1\overline{0}\rangle| k+1}, {{\mathcal A}_{k, F_{\langle1\overline{0}\rangle| k+1}}}) =\emptyset$,
\end{enumerate} 
then $\sigma$ can be defeated, so there exists $y_{\langle 1\rangle}\in   Y$ such that 
$$
{y_{\langle{1}\rangle}}\in \bigcap_{k\in\omega} \,\,\text{St}( x_{\langle 1\overline{0}\rangle| k+1}, {{\mathcal A}_{k, F_{\langle1\overline{0}\rangle| k+1}}}) = \bigcap_{k\in\omega} (A_{k, F_{\langle 1\rangle}}^{g_{\langle 1\overline{0}\rangle| k+1}(k)})$$  
Note that $y_{\langle 1\rangle}\neq y_{\langle 0\rangle}$ by 2) of the statement as the latter is in $F_{\emptyset}$, so there exists a minimum $m_\emptyset\in\omega$ such that, in inning $m_\emptyset$,  
$g_{\langle \overline{0}\rangle| m_\emptyset+1}\neq g_{\langle 1\overline{0}\rangle\rangle| m_\emptyset+1}$ which means that $$H(y_{{\langle 0\rangle}})( m_\emptyset) \neq H(y_{{\langle 1\rangle}})( m_\emptyset) \quad \text{and} \quad H(y_{{\langle 0\rangle}})| m_\emptyset = H(y_{{\langle 1\rangle}})| m_\emptyset.$$

  Let $n>1$ and assume we have defined $P_s$, ${y_s}$ and $F_{s}$, for every $s\in 2^{\leq n}$,  $m_t\in\omega$ for every $t\in 2^{< n}$, and we have also defined $F_{s^\smallfrown\overline{0}| k}$ for all $k>|s|$ and $g_{s^\smallfrown\langle \overline{0}\rangle| k}(k)$ for all $k$ such that:
\begin{enumerate} 
\item $P_s =({{\mathcal A}_{0,F_{s | 1}}}, x_{s | 1},\ldots, {{\mathcal A}_{{n-1}, F_{s| n}}}, x_{s| n })$ which is    a partial play of the game following $\sigma$. Thus, if $s$ extends $t$, then $P_s$ extends $P_t$.

\item  $P_{s^\smallfrown\langle \overline{0}\rangle} = P_s^\smallfrown ({{\mathcal A}_{n, F_s}}, x_{s^\smallfrown\langle \overline{0}\rangle| n+1},{{\mathcal A}_{{n+1},F_s}}, x_{s^\smallfrown\langle \overline{0}\rangle| n+2},\ldots)$ is a branch of a play of the game corresponding to $s^\smallfrown\langle\overline{0}\rangle$. 

\item $F_s = \{{y_t} : t\subset s\}$. And for $k>|s|$,  $F_{s^\smallfrown\overline{0}| k} = F_s$.

\item $g_{s^\smallfrown\langle \overline{0}\rangle| k+1}(k)\in \{0,1\}$ such that $x_{s^\smallfrown \langle\overline{0}\rangle| k+1}\in  A_{k, F_{s}}^{g_{s^\smallfrown\langle \overline{0}\rangle| k+1}(k)}$.

\item ${y_s}\in \bigcap_{k\in\omega} A_{k, F_s}^{g_{s^\smallfrown\langle \overline{0}\rangle| k+1}(k)}$.

\item  $m_t$ satisfies the following:
\begin{enumerate}
    \item $H(y_{{t^\smallfrown 0}}) ( m_t) \neq H(y_{t^\smallfrown 1}) ( m_t )$, and  $H(y_{t^\smallfrown 0}) | m_t = H(y_{t^\smallfrown 1}) |  m_t $. 
    \item  $H(y_{r})| m_r = H(y_{ t})| m_r$ if $t\in[r]\cap2^{<n}$. 
    \item $H(y_{ r})| m_{\Delta(r,t)} = H(y_{ t})| m_{\Delta(r,t)} \,\,\textnormal{and}\,\, H(y_{ r})(m_{\Delta(r,t)}) \neq H(y_{ t})(m_{\Delta(r,t)})$ if $r$ and $t$ are not comparable in $2^{<n}$, where $\Delta(r,t)$ is the maximum in $2^{<n}$ which both $r$ and $t$ extend.
\end{enumerate}
\end{enumerate}
Now, for every $s\in 2^{n}$, define $F_{s^\smallfrown 0} =  F_s$, and $F_{s^\smallfrown0^\smallfrown\langle\overline 0 \rangle| k+1} = F_s$, for all $k> n$.  
 In inning $n$, let Player \textbf{I} use the plain strategy modified by $F_{s}$ and Player \textbf{II} uses $\sigma$, then the partial play of the game is $P_{s^\smallfrown 0} = P_s^\smallfrown ({{\mathcal A}_{n, F_{s}}}, x_{s^\smallfrown 0})$. We extend it to a full play of the game with the plain strategy only modified by $F_{s}$,
 $
 P_{s^\smallfrown\langle0 \overline{0}\rangle} = P_{s^\smallfrown 0}\smallfrown ({{\mathcal A}_{{n+1},F_s}}, x_{s^\smallfrown\langle \overline{0}\rangle| n+2}\ldots).
 $
  Therefore,    the play of the game, corresponding to $s^\smallfrown 0$, is equal to the one that corresponds to $s$, $P_{s^\smallfrown\langle \overline{0}\rangle}$. So, let $y_{s^\smallfrown 0} = y_s$.

   Define $F_{s^\smallfrown 1} =  F_s\cup\{{y _s}\}$ and
$F_{s^\smallfrown\langle1\overline{0}\rangle| k+1} = F_{s^\smallfrown 1}$, for $k>n$. In inning $n$, let Player I use the plain strategy modified by $F_{s^\smallfrown 1| k+1}$ at inning $k\leq n$ against $\sigma$, then the partial play of the game is $P_{s^\smallfrown 1} = P_s^\smallfrown ({{\mathcal A}_{n, F_{s^\smallfrown 1}}}, x_{s^\smallfrown 1} )$. We extend it to a full play of the game with the plain strategy modified by $F_{s^\smallfrown 1}$ at inning $k>n$, 
 $
 P_{s^\smallfrown\langle1 \overline{0}\rangle} = P_{s^\smallfrown 1}\smallfrown ({{\mathcal A}_{{n+1}, F_{s^\smallfrown 1}}}, x_{s^\smallfrown\langle1 \overline{0}\rangle| n+2},\ldots).
 $
As in the base case of the construction, if either
\begin{enumerate}[1)]
    \item there exists $k\geq 0$ such that $x_{\langle s^\smallfrown 1\overline{0}\rangle| k+1}\in B$, where $B\in \mathcal{B}_{k, F_{s^\smallfrown\langle 1\overline{0}\rangle| k+1}}$, or
  
    \item $\bigcap_{k\in\omega} \,\,\text{St}( x_{s^\smallfrown\langle 1\overline{0}\rangle| k+1}, {{\mathcal A}_{k, F_{s^\smallfrown\langle 1\overline{0}\rangle| k+1}}}) =\emptyset$
\end{enumerate} 

then, there is a play of the game where the strategy $\sigma$ is defeated. Otherwise, for each $s\in 2^n$ there exists $y_{s^\smallfrown 1}\in Y$ such that 
$$
{y_{s^\smallfrown 1}}\in \bigcap_{k\in\omega} \,\,\text{St}( x_{s^\smallfrown\langle1 \overline{0}\rangle| k+1}, {\mathcal A}_{k, F_{s^\smallfrown\langle 1\overline{0}\rangle| k+1}})
$$ 

Since ${y _s}\in F_{s^\smallfrown 1}$, then  $y_{s^\smallfrown 0} = y _s  \neq y_{s^\smallfrown 1}$, so there exists a minimum $m_{s}$ such that in inning $m_s$, $g_{s^\smallfrown\langle  \overline{0}\rangle| m_{s}+1})\neq g_{s^\smallfrown \langle 1\overline{0}\rangle| m_{s}+1}$. Hence, $H(y_{ s})( m_s) \neq H(y_{s^\smallfrown 1})(m_s)$ and $H(y_{s})| m_s = H(y_{s^\smallfrown 1})| m_s$.
 Then, the elements of $\{ y_s: s\in 2^n\}$ are distinct.  To see that, let $r\neq s$ in $2^n$. Thus, there is $k<n$ and  $t\in 2^{k}$ such that $\Delta(s,r) = t$. Then, $s(k)\neq r(k)$. Assume without loss of generality that $s(k) = 0$ and $r(k) = 1$. Then, $H(y_{  s})(m_t) = H(y_{t^\smallfrown 0})(m_t)  \neq H(y_{t^\smallfrown 1})(m_t)  = H(y_{r})(m_t).$

Now, we have defined for every $f\in 2^\omega$, a play of the game $$P_f = ({{\mathcal A}_{0, F_{f | 1}}}, x_{f| 1},\ldots, {{\mathcal A}_{{n}, F_{f| n+1}}}, x_{f| n+1 }, \ldots).$$  

If there is an $f\in 2^\omega$ such that 
$\bigcap_{k\in\omega} A_k^{i_{f| k+1}} = \emptyset,$
then the play of the game, corresponding to that $f$, is the one when Player \textbf{I} defeats $\sigma$. Otherwise, for each $f\in 2^\omega$ there is a unique $  y_f$ such that $$\bigcap_{k\in\omega} A_{k, F_{f|(k+1)}}^{i_{f| k+1}} = \{{  y_f}\}.$$ 

The uniqueness follows from the fact that if $y$ is in the preceding intersection, then $H(y)=(i_{f|k+1})_{k \in \omega}$, as $H$ is injective. Hence, the mapping $G:2^\omega\to H(Y)$ is one-to-one, where  $G(f) = H(y_{   f})$, for each $f\in 2^\omega$. It is also continuous. Indeed, let $V$ be a basic open set in $H(Y)$, then there exist $n$ and $s\in 2^n$ such that $V = \{f\in 2^\omega: f \supset s\}\cap H(Y)$.   Note that  $H(y_f)(k) = g_{f| k+1}(k)$, for all  $f\in G^{-1}(V)$. Then, for every $k<n$, $s(k) = g_{f|k+1}(k)$ for all  $f\in G^{-1}(V)$. Therefore, there exists $t_s\in 2^n$ such that $s(k) = g_{t_s| k+1}$.  Hence, $G^{-1}(V) = \{f\in 2^\omega: f \supset t_s\}$ which is a basic open set in $2^\omega$.  Then $H(Y)$ contains a Cantor set which is a contradiction.
\end{proof}
\section{Anti-Luzin and nowhere mad families}

    In the previous section, we improved results from \cite{AS} which implied the existence of non-normal semi-proximal Isbell-Mrówka spaces in ZFC. In this section, we investigate how close they are to being normal by showing that semi-proximal almost disjoint families are nowhere MAD and anti-Luzin. It is well-known that normal Isbell-Mrówka spaces also have these properties (see \cite{Hrusak2014}).

    A Luzin family is an almost disjoint family of cardinality $\omega_1$ which can be indexed as $(a_\alpha:\alpha<\omega_1)$ so that for every finite $F\subseteq N$ and $\alpha<\omega_1$, the set $\{\beta<\alpha: a_\beta\cap a_\alpha\subseteq F\}$ is finite. These families have the property that no two disjoint uncountable sets can be separated by open sets. To better study these families, the following definition was introduced in \cite{roitman1998luzin}:
        \begin{definition}
            An almost disjoint family $\mathcal A$ is said to be anti-Luzin if for every pair of uncountable subsets of $\mathcal A$, $\mathcal A_0, A_1$, there exists uncountable sets $\mathcal B_0\subseteq \mathcal A_0, \mathcal B_1\subseteq \mathcal A_1$ which can be separated by open sets.
        \end{definition}

If $\mathcal A$ is an almost disjoint family, $\mathcal B, \mathcal C\subseteq A$ and $X\subseteq \omega$, we say that \textit{$X$ separates $\mathcal B$ and $\mathcal C$} if for every $b \in \mathcal B$, $b\subseteq^* X$ and for every $c \in \mathcal C$, $b\cap X=^*\emptyset$. In case such an $X$ exists, we say $\mathcal B$, $\mathcal C$ can be separated. An uncountable almost disjoint family $\mathcal A$ is said to be \textit{inseparable} if no two uncountable disjoint subsets can be separated. Note that $\mathcal B, \mathcal C$ can be separated if and only if there exist two disjoint open sets $G, G'\subseteq \Psi(\mathcal A)$ such that $\mathcal B\subseteq G$, $\mathcal B\subseteq G'$. It is well known that every Luzin family is inseparable, but under CH there exists an inseparable family with no Luzin subfamily \cite{AbrahamShelah}.

\begin{proposition}\label{inseparable}
   Semi-proximal almost disjoint families are anti-Luzin.
\end{proposition}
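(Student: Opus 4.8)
The plan is to prove the contrapositive: if $\mathcal A$ is \emph{not} anti-Luzin then $\Psi(\mathcal A)$ is not semi-proximal, that is, for \emph{every} compatible uniformity $\mathbb U$ Player~\textbf{II} has a winning strategy in the proximal game on $(\Psi(\mathcal A),\mathbb U)$. So fix uncountable $\mathcal A_0,\mathcal A_1\subseteq\mathcal A$ such that no uncountable $\mathcal B_0\subseteq\mathcal A_0$ and $\mathcal B_1\subseteq\mathcal A_1$ can be separated, and fix an arbitrary compatible $\mathbb U$ with base $\mathbb B$ of symmetric open entourages; the construction below is uniform in $\mathbb U$, so this suffices.

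First I would record a ``capture lemma'', using only separability of $\Psi(\mathcal A)$ and the fact that every subfamily of $\mathcal A$ is closed and discrete: for every $U\in\mathbb B$ and every uncountable $\mathcal C\subseteq\mathcal A$ there are $p\in N$ and an uncountable $\mathcal C'\subseteq\mathcal C$ with $\mathcal C'\subseteq U[p]$ and $\mathcal C'\subseteq U[a]$ for every $a\in\mathcal C'$. To see this, fix $W\in\mathbb B$ with $W^2\subseteq U$; for each $a\in\mathcal C$ choose $p_a\in N\cap W[a]$, possible because $a$ lies in the open set $W[a]$ and $N$ is dense; since $N$ is countable some fiber $\mathcal C'=\{a\in\mathcal C:p_a=p\}$ is uncountable, $\mathcal C'\subseteq W[p]\subseteq U[p]$, and for $a,b\in\mathcal C'$ we get $(a,b)\in W^2\subseteq U$. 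Using this, Player~\textbf{II} carries, by recursion on innings, a decreasing chain $\mathcal C_{-1}\supseteq\mathcal C_0\supseteq\mathcal C_1\supseteq\cdots$ of uncountable subfamilies of $\mathcal A_0$, with $\mathcal C_{-1}=\mathcal A_0$: at inning $n$ she applies the capture lemma to $\mathcal C_{n-1}$ and $U_n$ to get an uncountable $\mathcal C_n\subseteq\mathcal C_{n-1}$ with $\mathcal C_n\subseteq U_n[a]$ for all $a\in\mathcal C_n$, and plays as $x_n$ any point of $\mathcal C_n$ not used before. Her move is legal (for $n\ge1$) since $x_n\in\mathcal C_n\subseteq\mathcal C_{n-1}\subseteq U_{n-1}[x_{n-1}]$; the sequence $(x_n)_{n\in\omega}$ is injective in the closed discrete set $\mathcal A$ and hence does not converge; and $\mathcal C_n\subseteq U_n[x_n]$ for every $n$. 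Thus if $\bigcap_{n}\mathcal C_n\ne\emptyset$, any point of that intersection lies in every $U_n[x_n]$, and Player~\textbf{II} wins.

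The non-separation hypothesis is needed for the one remaining requirement, $\bigcap_{n}\mathcal C_n\ne\emptyset$ (equivalently $\bigcap_n U_n[x_n]\ne\emptyset$). What it buys is a closure fact: if $G$ is open with $G\cap\mathcal A_0$ uncountable then $\overline G\cap\mathcal A_1$ is uncountable, since otherwise $G$ and $\Psi(\mathcal A)\setminus\overline G$ would be disjoint open sets separating the uncountable family $\mathcal A_1\setminus\overline G$ from $G\cap\mathcal A_0$; and symmetrically with $\mathcal A_0,\mathcal A_1$ interchanged. Together with $\overline{W[x]}\subseteq 2W[x]$, this says that each ball $W_n[p_n]$ in which Player~\textbf{II} traps $\mathcal C_n$ has its slightly enlarged closure meeting $\mathcal A_1$ uncountably, so the construction in fact traps uncountable pieces of $\mathcal A_1$ around the same points $p_n$; the strategy should be organized so that the $\mathcal A_0$- and $\mathcal A_1$-trappings are maintained simultaneously, in nested balls.

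The step I expect to be the main obstacle is to turn this into an actual nonempty intersection: a decreasing $\omega$-chain of uncountable subfamilies of $\mathcal A$ can have empty intersection, so Player~\textbf{II} must choose her refinements $\mathcal C_n$ (and the dual $\mathcal A_1$-families) with care, not as arbitrary fibers from the capture lemma. I would handle this by a fusion/diagonalization across the possible continuations of the play — in the spirit of the tree construction in the proof of Lemma~\ref{technicalMachinery} — invoking the closure fact at every node to keep the trapped $\mathcal A_1$-side from ``escaping'', and thereby extract a point belonging to $\overline{U_n[x_n]}$ for all $n$; absorbing the resulting factors of two into the choice of $W_n$ at each inning, that point lies in every $U_n[x_n]$. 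This defeats the other half of Player~\textbf{I}'s winning condition, so Player~\textbf{II} wins the game on $(\Psi(\mathcal A),\mathbb U)$; as $\mathbb U$ was arbitrary, $\Psi(\mathcal A)$ is not semi-proximal, which is the desired contrapositive.
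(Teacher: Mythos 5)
Your setup matches the paper's: both proofs go by the contrapositive, have Player \textbf{II} play points of an inseparable (sub)family while trapping an uncountable piece of it inside a ball $V[x_n]$ with $2V\subseteq U_n$, and both note the sequence cannot converge because it is injective in a closed discrete set. The problem is the step you yourself flag as "the main obstacle": you never actually prove $\bigcap_n U_n[x_n]\neq\emptyset$, deferring instead to an unspecified fusion/diagonalization over continuations of the play. As written, that is a genuine gap — a winning strategy for \textbf{II} must produce a point in the intersection for \emph{every} run following the strategy, and nothing in your sketch extracts one.

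The irony is that your own proof of the "closure fact" already contains the missing ingredient, and no fusion is needed. What your argument shows is not merely that $\overline G\cap\mathcal A_1$ is uncountable when $G\cap\mathcal A_0$ is uncountable, but that $\mathcal A_1\setminus\overline G$ is \emph{countable}: otherwise $G$ and $\Psi(\mathcal A)\setminus\overline G$ separate the uncountable families $G\cap\mathcal A_0$ and $\mathcal A_1\setminus\overline G$. Now apply this at each inning with $G=V_n[x_n]$, where $2V_n\subseteq U_n$ and $\mathcal C_n\subseteq V_n[x_n]$ witnesses that $V_n[x_n]\cap\mathcal A_0$ is uncountable: since $\overline{V_n[x_n]}\subseteq 2V_n[x_n]\subseteq U_n[x_n]$, each $U_n[x_n]$ contains all but countably many members of $\mathcal A_1$, hence $\bigcap_n U_n[x_n]$ contains co-countably many members of the uncountable set $\mathcal A_1$ and is nonempty. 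This is exactly how the paper closes the argument (phrased there for a single inseparable family $\mathcal A'$, concluding that $\mathcal A'\setminus U_k[x_k]$ is countable for each $k$); the decreasing chain $\bigcap_n\mathcal C_n$ plays no role and may well be empty. With that one observation your proof is complete and essentially identical to the paper's; without it, it is not a proof.
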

\begin{proof}
    Let $\mathcal A$ be an almost disjoint family which is not anti-Luzin. There exists $\mathcal A'\subseteq \mathcal A$ such that no uncountable subsets of $\mathcal A'$ can be separated. Let $\mathbb U$ be a uniform structure for $\Psi(\mathcal A)$, and $\mathbb B$ be the basis of all symmetric open elements of $\mathbb U$.

    We show that \textbf{II} has a winning strategy $\sigma$ for the proximal game for $\mathbb B$. The strategy $\sigma$ will satisfy the following properties for all $k\in \omega$ and all decreasing sequences $(U_0, \dots, U_k)\in \mathbb B^{k+1}$:

    \begin{enumerate}[1)]
         \item $\sigma(U_0, \dots, U_k) \in \mathcal A'$.
         \item There exists $V\in \mathbb B$ such that $2V\subseteq U_k$ and that $V[\sigma(U_0, \dots, U_k)]\cap \mathcal A'$ is uncountable.
         \item $\sigma(U_0, \dots, U_{k-1})\neq \sigma(U_0, \dots, U_{k})$ if $k>0$.
    \end{enumerate}

    We show how to define such a $\sigma$ recursively.
    
    For $U \in \mathbb B$, fix $W\in \mathcal U$ such that $4 W\subseteq U$. $\{W[n]: n \in \omega\}$ covers $\mathcal A'$, so there exists $n \in \omega$ such that $W[n]\cap \mathcal A'$ is uncountable. Let $V=2W$. Let $\sigma(U)\in \mathcal A'\cap W[n]$. Then $V[\sigma(U)]\cap A'$ is uncountable as $W[n]\cap \mathcal A'\subseteq V[\sigma(U)]\cap \mathcal A'$.

    Assume $k\geq 0$ and that we have defined $\sigma(U_0, \dots, U_k)$, where $(U_0, \dots, U_k)$ is a decreasing sequence in $\mathbb B$. Let $U_{k+1}\in \mathbb B$ be such that $\mathcal U_{k+1}\subseteq U_k$. We must define $\sigma(U_0, \dots, U_{k+1})$.
    
    Let $W \in \mathcal U$ be such that $4W\subseteq U_{k+1}$ and $V=2W$. As $U_{k}[\sigma(U_0, \dots, U_k)]\cap \mathcal A'$ is uncountable, there exists $n\in \omega$ such that $W[n]\cap U_k[\sigma(U_0, \dots, U_k)]\cap \mathcal A'$ is uncountable.
    Let     $\sigma(U_0, \dots, U_k, U_{k+1}) \in W[n]\cap U_k[\sigma(U_0, \dots, U_k)]\cap \mathcal A'\setminus \{\sigma(U_0, \dots, U_k)\}$. Then $V[\sigma(U_0, \dots, U_k, U_{k+1})]\cap \mathcal A'\supseteq W[n]\cap \mathcal A'$ is uncountable.

    Thus, a strategy satisfying 1) and 2) exists. We show that $\sigma$ is a winning strategy: Assume $(U_0, x_0, U_1, x_1, \dots)$ is a play of the game according to $\sigma$. Then $(x_k)_{k \in \omega}$ does not converge as it is a non eventually constant sequence in the discrete closed set $\mathcal A$. It remains to see then that $\bigcap_{k \in \omega} U_k[x_k]\neq \emptyset$. It suffices to see that for each $k \in \omega$, $\mathcal A'\setminus U_k[x_k]$ is countable. Fix $k$. Fix $V$ as in 2). $V[x_k]\cap \mathcal A'$ is a uncountable subset of $\mathcal A'$ separated from $\mathcal A'\setminus \cl V[x_k]$, thus $\mathcal A'\setminus \cl V[x_k]$ is countable. As $\cl V[x_k]\subseteq 2 V[x_k]\subseteq U_k [x_k]$, we are done.
\end{proof}
In particular, Luzin families are not semi-proximal.

We say that an almost disjoint family $\mathcal A$ is \textit{nowhere MAD} iff there exists $X \in \mathcal I^+(\mathcal A)$ such that for every $B \in [X]^\omega$ there exists $a \in \mathcal A$ such that $|a\cap B|=\omega$.

\begin{proposition}
    Semi-proximal almost disjoint families are nowhere MAD.
\end{proposition}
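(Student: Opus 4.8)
I would argue by contraposition: suppose $\mathcal A$ is not nowhere MAD, so that there is $X\in\mathcal I^{+}(\mathcal A)$ with the property that every infinite $B\subseteq X$ satisfies $|a\cap B|=\omega$ for some $a\in\mathcal A$ — i.e. $\mathcal A\restriction X$ is MAD on $X$ — and I want to conclude that $\Psi(\mathcal A)$ is not semi-proximal, that is, that Player \textbf{II} has a winning strategy in the proximal game for \emph{every} compatible uniformity. The first move is a reduction. Set $\mathcal A_X=\{a\in\mathcal A:|a\cap X|=\omega\}$; since a countable almost disjoint family is never maximal (the usual diagonalization produces an infinite $B\subseteq X$ meeting each member finitely), $\mathcal A_X$ is uncountable, and $\mathcal B=\{a\cap X:a\in\mathcal A_X\}$ is an uncountable MAD family on $X$ (distinct members of $\mathcal A_X$ have distinct, still almost disjoint, traces, and the hypothesis is exactly maximality of $\mathcal B$ on $X$). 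One checks routinely that $\overline X:=X\cup\mathcal A_X$ is closed in $\Psi(\mathcal A)$ and, via $a\mapsto a\cap X$, homeomorphic to $\Psi(\mathcal B)$, and that semi-proximality passes to closed subspaces equipped with the induced uniformity: given a winning strategy for \textbf{II} on a closed subspace, \textbf{II} copies it on the ambient space — legality is immediate, the resulting sequence still fails to converge because the subspace is closed, and $\bigcap_k U_k[x_k]$ only grows when one enlarges the entourages. Hence it is enough to show: for every uncountable MAD family $\mathcal B$ — we may assume $\bigcup\mathcal B=\omega$, the remaining points forming a clopen discrete summand irrelevant to semi-proximality — and every compatible uniformity $\mathbb U$ on $\Psi(\mathcal B)$, Player \textbf{II} wins the proximal game.

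To build \textbf{II}'s strategy, fix the basis $\mathbb B$ of symmetric open entourages. The plan is to have \textbf{II} play a sequence $(b_k)$ of \emph{distinct} points of $\mathcal B$ — which, $\mathcal B$ being closed and discrete in $\Psi(\mathcal B)$, automatically does not converge — so that \textbf{II} only has to secure $\bigcap_k U_k[b_k]\neq\emptyset$. Alongside the $b_k$'s, \textbf{II} maintains a decreasing sequence $(S_k)$ of infinite subsets of $\omega$ possessing an infinite pseudo-intersection $S$ (e.g. by fixing in advance a partition of $S_0$ into infinitely many infinite pieces and only ever deleting one piece per inning), subject to the linking requirement that $\{\,b\in\mathcal B:|b\cap S_k|=\omega\,\}\subseteq U_k[b_k]$ for all $k$. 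Granting this, maximality finishes the argument: $S$ is infinite, so $|b\cap S|=\omega$ for some $b\in\mathcal B$; then $|b\cap S_k|=\omega$ for every $k$ (as $S\subseteq^{*}S_k$), whence $b\in U_k[b_k]$ for every $k$, i.e. $b\in\bigcap_k U_k[b_k]$, and \textbf{II} wins.

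The point that this approach localizes, and which I expect to be the main obstacle, is maintaining the linking requirement against Player \textbf{I}. In Proposition \ref{inseparable} the analogous fact $\bigcap_k U_k[x_k]\neq\emptyset$ came for free because inseparability forces each complement $\mathcal B\setminus U_k[x_k]$ to be countable; for a general MAD family that is false, so one cannot simply keep a ``large reservoir'' of candidate limit points and take a co-countable intersection — instead one must route everything through maximality, organizing the reservoir combinatorially as $\{b:|b\cap S_k|=\omega\}$ so that the pseudo-intersection keeps it from being exhausted over the $\omega$ innings. Concretely, after \textbf{I} plays $U_k\subseteq U_{k-1}$, \textbf{II} must exhibit a new $b_k\in U_{k-1}[b_{k-1}]$ and an infinite $S_k\subseteq^{*}S_{k-1}$ (within the one-piece budget) with $\{b:|b\cap S_k|=\omega\}\subseteq U_k[b_k]$; the covering facts used in Proposition \ref{inseparable} (that the balls $\{W[n]:n\in\omega\}$ cover $\mathcal B$ for any $W\in\mathbb B$, together with total boundedness or a Baire category argument when relevant) should drive the choice of $b_k$, after which $S_k$ is shrunk so as to concentrate, in the sense of infinite intersection, exactly on the members of $\mathcal B$ that remain $U_k$-close to $b_k$. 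Making this bookkeeping go through — simultaneously meeting the legality constraint, the distinctness constraint, the linking constraint, and the survival of the pseudo-intersection — is the technical heart of the proof; once it is in place, $(b_k)$ is non-convergent and $\bigcap_k U_k[b_k]\neq\emptyset$, so \textbf{II} wins, $\mathbb U$ is not semi-proximal, and therefore neither is $\Psi(\mathcal A)$.
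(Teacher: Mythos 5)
Your overall plan coincides with the paper's: contrapose, fix $X\in\mathcal I^{+}(\mathcal A)$ on which $\mathcal A$ traces a MAD family, have Player \textbf{II} play distinct members of $\mathcal B=\{a\in\mathcal A: |a\cap X|=\omega\}$ (so the sequence cannot converge), maintain decreasing infinite subsets $S_k\subseteq X$ such that every $b\in\mathcal B$ meeting $S_k$ infinitely lies in $U_k[b_k]$, and finish by applying maximality to an infinite pseudo-intersection. (The closed-subspace reduction is correct but not needed; the paper runs the game directly in $\Psi(\mathcal A)$.) The problem is that you have left exactly the step that makes this work --- producing such $S_k$ while keeping the game alive --- as an acknowledged obstacle, and the mechanism you sketch does not close it. The paper's device for the linking condition is to fix in advance, by halving, symmetric open entourages $V_k$ with $2V_k\subseteq U_k$ and $V_{k+1}\subseteq V_k$, and to take $S_k=V_k[b_k]\cap X$: any $b$ with $|b\cap S_k|=\omega$ is then a cluster point of $V_k[b_k]$, hence lies in $\overline{V_k[b_k]}\subseteq 2V_k[b_k]\subseteq U_k[b_k]$ by the lemma $\overline{V}[a]\subseteq 2V[a]$. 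Nothing plays this role in your proposal; one cannot ``shrink $S_k$ so as to concentrate on the members of $\mathcal B$ that are $U_k$-close to $b_k$'' in the abstract.

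Your ``delete one partition piece per inning'' budget for the pseudo-intersection is both unnecessary and unworkable: unnecessary because any decreasing sequence of infinite sets has an infinite pseudo-intersection, and unworkable because the linking requirement can force $S_k$ far below any fixed partition piece (if $U_k[b_k]\cap N\subseteq^{*} b_k$, linking forces $S_k\subseteq^{*} b_k$, and since the $b_k$ are almost disjoint the $S_k$ could not then stay infinite and decreasing at all). What actually keeps the recursion alive is the paper's invariant that $V_k[b_k]\cap\mathcal B$ is \emph{uncountable}, maintained exactly as in Proposition \ref{inseparable}: since $N$ is countable and dense, $\{W[n]:n\in N\}$ covers $\mathcal B$ for a suitable $W$ with $4W\subseteq U_{k+1}$, so some $W[n]$ meets the current uncountable reservoir $V_k[b_k]\cap\mathcal B$ in an uncountable set, and $b_{k+1}$ is chosen there. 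This pigeonhole simultaneously supplies a fresh legal point, propagates the uncountable reservoir, and keeps $S_{k+1}$ infinite (each $b\in V_k[b_k]\cap\mathcal B$ has $b\subseteq^{*}V_k[b_k]$ since the latter is open). Your setup and your closing maximality argument are right, but the two ideas that constitute the proof --- half-entourage traces plus the closure lemma for linking, and the uncountable-reservoir pigeonhole for survival --- are missing, so the proposal has a genuine gap.
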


\begin{proof}
    Assume $\mathcal A$ is not nowhere MAD. There exists $X \in \mathcal I^+(\mathcal A)$ such that for every $B \in [X]^\omega$ there exists $a \in \mathcal A$ such that $|a\cap B|=\omega$ - then $\mathcal B$ is uncountable.
    
    We show that for every uniformity $\mathbb U$ compatible with the topology of $\Psi(\mathcal A)$, \textbf{II} has a winning strategy for the proximal game on $\mathbb B$, the collection of all open symmetric entourages of $\mathbb U$. Recursively, define $V: \mathbb B^{<\omega}\setminus \{\emptyset\}\rightarrow \mathbb B$ such that, for every $k\geq 0$ and $(U_0, \dots, U_k) \in \mathbb B^{k+1}$:

    \begin{enumerate}[a)]
    \item $2V(U_0, \dots, U_k) \subseteq U_k$ 
        \item If $k>0$, $V(U_0, \dots, U_k)\subseteq V(U_0, \dots, U_{k-1})$.
    \end{enumerate}

This may be achieved recursively by halving $U_{k+1}\cap V(U_0, \dots, U_k)$.

    We recursively construct the strategy $\sigma$ for \textbf{II} such that, for every $k\geq 0$ and every decreasing sequence$(U_0, \dots, U_k)\in \mathbb B^{k+1}$:

    \begin{enumerate}[1)]
         \item $\sigma(U_0, \dots, U_k) \in \mathcal B$.
         \item $V(U_0, \dots, U_k)[\sigma(U_0, \dots, U_k)]\cap \mathcal B$ is uncountable.  
        \item If $k>0$, $\sigma(U_0, \dots, U_{k})\neq \sigma(U_0, \dots, U_{k-1})\}$.

    \end{enumerate}
    
To do that, proceed exactly as in the previous proposition.

    To see that $\sigma$ is a winning strategy for \textbf{II}, assume $(U_0, x_0, U_1, x_1, \dots)$ is a play of the game according to $\sigma$. Then $(x_k)_{k \in \omega}$ does not converge as it is not eventually constant in discrete closed set $\mathcal B$. It remains to see then that $\bigcap_{k \in \omega} U_k[x_k]\neq \emptyset$. For each $k \in \omega$, let $V_k=V(U_0, \dots, U_k)$ and $B_k=V_k\cap X$.
    It follows that $\bigcap_{k \in \omega} \cl V_k[x_k]\subseteq \bigcap_{k \in \omega} 2 V_k[x_k]\subseteq \bigcap_{k \in \omega} U_k[x_k]$. Clearly, $(B_k: k \in \omega)$ is a decreasing sequence of natural numbers, and, given $k \in \omega$, $x_k\cap X\subseteq^* V_k[x_k]\cap X=B_k$, so $B_k$ is infinite. Let $P$ be a pseudointersection of this sequence contained in $X$. Let $a \in \mathcal B$ be such that $|a\cap P|=\omega$. Then $a \in \bigcap_{k \in \omega} \cl B_k\subseteq \bigcap_{k \in \omega} \cl V_k[x_k]\subseteq \bigcap_{k \in \omega} U_k[x_k]$.
    
\end{proof}
\section{\texorpdfstring{$\Psi$}{Psi}-like spaces from ladder systems}

In this section, we study semi-proximality in ladder system $\Psi$-spaces.

Recall that a $\clubsuit^*$ sequence is a ladder system (
$(L_\alpha: \alpha\in \Lim)$ so that for each uncountable $A\subseteq \omega_1$,
$$\{\alpha:|L_\alpha\cap A|=\aleph_0\}\text{ contains a club}. $$

The principle $\clubsuit^*$ follows from $\diamondsuit^*$, which holds in \textbf{L}, and was introduced in \cite{AS} where it was used to construct a de Caux-type Dowker space that is not semi-proximal. It is essentially a corollary to that proof that if ${\mathcal L}$ is a $\clubsuit^*$-sequence then $\Psi({\mathcal L})$ is not semi-proximal. In what follows, we provide a direct proof for this fact for the sake of completeness.

\begin{lemma}\label{unctble}  If the ladder system ${\mathcal L}$ satisfies $\clubsuit^*$, then
\begin{enumerate}[(a)]
    \item If $A\subseteq \omega_1\times\{0\}$ is uncountable, then $\overline{A}(1) = \{\alpha: \langle\alpha,1\rangle\in \overline{A}\}$ contains a club.
    \item  If $S$ is a stationary subset of $\omega_1$ and $U$ is any open neighbourhood of $S\times\{1\}$, then $U\cap(\omega_1\times\{0\})$ is co-countable.
    \item For every uniformity ${\mathbb U}$ on $\Psi(\mathcal{L})$, $U\in{\mathbb U}$, and $S$ a stationary subset of $\omega_1$, there exists a stationary set $S^\prime\subset S$ such that $(S^\prime\times\{1\})^2\subset U$. 
\end{enumerate}

\end{lemma}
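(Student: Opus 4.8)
The plan is to prove the three parts of Lemma~\ref{unctble} in order, using the defining property of $\clubsuit^*$ and standard facts about clubs and stationary sets.

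For part (a), I would argue as follows. Let $A\subseteq\omega_1\times\{0\}$ be uncountable; identify $A$ with an uncountable subset $A'\subseteq\omega_1$ via the first coordinate. By the $\clubsuit^*$ property applied to $A'$, the set $\{\alpha:|L_\alpha\cap A'|=\aleph_0\}$ contains a club $C$. For each $\alpha\in C$, since $L_\alpha\cap A'$ is infinite, every basic neighbourhood $\{\langle\alpha,1\rangle\}\cup(L_\alpha\setminus\xi)\times\{0\}$ of $\langle\alpha,1\rangle$ meets $A$; hence $\langle\alpha,1\rangle\in\overline A$, i.e.\ $\alpha\in\overline A(1)$. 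So $C\subseteq\overline A(1)$, proving (a).

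For part (b), let $S$ be stationary and $U$ an open neighbourhood of $S\times\{1\}$. Suppose towards a contradiction that $A:=(\omega_1\times\{0\})\setminus U$ is uncountable. By part (a), $\overline A(1)$ contains a club $D$, so $S\cap D\neq\emptyset$; pick $\alpha\in S\cap D$. Then $\langle\alpha,1\rangle\in S\times\{1\}\subseteq U$, but also $\langle\alpha,1\rangle\in\overline A$. Since $U$ is open, $U$ is a neighbourhood of $\langle\alpha,1\rangle$ meeting $A=(\omega_1\times\{0\})\setminus U$, which is absurd. Hence $(\omega_1\times\{0\})\setminus U$ is countable, i.e.\ $U\cap(\omega_1\times\{0\})$ is co-countable.

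For part (c), fix a uniformity $\mathbb U$ on $\Psi(\mathcal L)$, an entourage $U\in\mathbb U$, and a stationary $S\subseteq\omega_1$. Choose a symmetric open $V\in\mathbb U$ with $V^2\subseteq U$. For each $\alpha$ with $\langle\alpha,1\rangle\in\Psi(\mathcal L)$, the set $V[\langle\alpha,1\rangle]$ is an open neighbourhood of $\langle\alpha,1\rangle$, hence contains a basic neighbourhood, so there is $\xi_\alpha<\alpha$ with $(L_\alpha\setminus\xi_\alpha)\times\{0\}\subseteq V[\langle\alpha,1\rangle]$. By Fodor's lemma (pressing-down $\alpha\mapsto\xi_\alpha$ on the stationary set $\{\alpha\in S:\alpha\text{ limit}\}$), there is a stationary $S_1\subseteq S$ and a fixed $\xi^*$ with $\xi_\alpha=\xi^*$ for all $\alpha\in S_1$. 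Now I want, for $\alpha,\beta\in S_1$, to connect $\langle\alpha,1\rangle$ and $\langle\beta,1\rangle$ through a common point of $\omega_1\times\{0\}$; the issue is that $(L_\alpha\setminus\xi^*)$ and $(L_\beta\setminus\xi^*)$ need not intersect. To fix this, observe that for each $\alpha\in S_1$ there is $n_\alpha\in\omega$ with the $n_\alpha$-th element of $L_\alpha$ already $\geq\xi^*$, and more usefully: for $\alpha<\beta$ in $S_1$, $L_\beta$ is cofinal in $\beta$, so it meets the interval $[\xi^*,\beta)$; but to land inside $L_\alpha$ I instead use that the point $\langle\gamma,1\rangle$ with $\gamma=\min S_1$ has $V[\langle\gamma,1\rangle]$ co-countable in $\omega_1\times\{0\}$ — wait, that is part (b), which applies to open neighbourhoods of stationary-many points, not a single point. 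The cleaner route: apply part (b) to the open set $W=\bigcup_{\alpha\in S_1}V[\langle\alpha,1\rangle]\supseteq S_1\times\{1\}$, a neighbourhood of the stationary set $S_1\times\{1\}$; so $W\cap(\omega_1\times\{0\})$ is co-countable, but that still does not by itself give a single shared point for a fixed pair. So instead I shrink further: for each $\alpha\in S_1$ pick one point $p_\alpha=\langle m_\alpha,0\rangle\in(L_\alpha\setminus\xi^*)\times\{0\}$; if uncountably many $\alpha\in S_1$ share the same $p_\alpha=\langle m,0\rangle$, call that stationary (in fact just uncountable — but we can extract stationary by a counting/Fodor argument on where $m$ sits) set $S'$; then for $\alpha,\beta\in S'$, $(\langle\alpha,1\rangle,\langle m,0\rangle)\in V$ and $(\langle m,0\rangle,\langle\beta,1\rangle)\in V$ by symmetry, so $(\langle\alpha,1\rangle,\langle\beta,1\rangle)\in V^2\subseteq U$, giving $(S'\times\{1\})^2\subseteq U$ as required. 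The main obstacle, and the step needing care, is precisely this last extraction: ensuring the refined set $S'$ is stationary (not merely uncountable). I expect to handle it by noting that $m_\alpha\in L_\alpha\subseteq\alpha$, so $\alpha\mapsto m_\alpha$ is a regressive function on the stationary set $S_1$, whence by Fodor's lemma it is constant on a stationary subset $S'$; that is the clean justification, and it makes part (c) go through.
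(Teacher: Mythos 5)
Your proofs of (a) and (b) match the paper's: (a) is a direct application of $\clubsuit^*$, and your (b) is the same contradiction the paper runs, merely invoked through part (a) instead of re-deriving the club (the paper phrases the contradiction as $L_\alpha\times\{0\}\subseteq^* U$ versus $L_\alpha\cap A(0)$ infinite; yours as ``$U$ is a neighbourhood of a closure point of $A$ yet misses $A$''---these are the same observation). Part (c) is where you genuinely diverge, and your final argument is correct. The paper applies part (a) to $S\times\{0\}$ to get a club of $\alpha$ with $\langle\alpha,1\rangle\in\overline{S\times\{0\}}$, and then presses down on $f(\alpha)=\min\bigl(V[\langle\alpha,1\rangle]\cap(S\times\{0\})\bigr)(0)$, so the common witness $\langle\beta,0\rangle$ lies over $S$; this route uses $\clubsuit^*$. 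You instead press down on a point $m_\alpha\in L_\alpha$ with $\langle m_\alpha,0\rangle\in V[\langle\alpha,1\rangle]$, which exists simply because $V[\langle\alpha,1\rangle]$ is a neighbourhood of $\langle\alpha,1\rangle$ and hence contains a tail of $L_\alpha\times\{0\}$; since $L_\alpha\subseteq\alpha$ the map $\alpha\mapsto m_\alpha$ is regressive, and Fodor finishes exactly as you say. Your version is in fact slightly stronger: it shows (c) holds for \emph{every} ladder system, with no use of $\clubsuit^*$. Two small tidying remarks: your first pressing-down (on $\xi_\alpha$, producing $\xi^*$) is redundant, since you can choose $m_\alpha$ directly from $L_\alpha\setminus\xi_\alpha$; and the worry you voice at the end about whether the refined set is stationary rather than merely uncountable is indeed resolved by the regressiveness of $\alpha\mapsto m_\alpha$, so the final write-up should simply present that single Fodor application (restricted to $S\cap\Lim$, which is stationary) and drop the exploratory detours.
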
 
\begin{proof} (a) Let $A$ be uncountable in $\omega_1\times\{0\}$. Then $\{\alpha: |(L_\alpha\times \{0\})\cap A| = \omega\}$ contains a club $C$. Hence, $\langle\alpha,1\rangle\in \overline{A}$, for all $\alpha\in C$, so that $\overline{A}({1})$ contains a club.

(b) Let $S$ be a stationary subset of $\omega_1$ and $U$ be any open neighbourhood of $S\times\{1\}$. Suppose $A = (\omega_1\times\{0\})\setminus U$ is uncountable. Since $A(0)\in[\omega_1]^{\omega_1}$, there exists a club $C$ such that $A(0)\cap L_\alpha$ is infinite for all $\alpha\in C$. But for every $\alpha\in C\cap S$, $\langle\alpha, 1\rangle\in U$. Hence, $L_\alpha\times\{0\}\subset^* U$, contradiction. Thus, $U\cap(\omega_1\times\{0\})$ is co-countable.

(c) Let ${\mathbb U}$ be a uniformity on $X$,  $U\in{\mathbb U}$ and $S$ be a stationary subset of $\omega_1$. Since $S\times\{0\}$ is uncountable, $\overline{S\times\{0\}}(1)$ contains a club $C$, by Lemma \ref{unctble} (a).  Let $A_\alpha = V[\langle\alpha, 1\rangle]\cap (S\times\{0\})$, for each $\alpha\in C$, where $V\in\mathfrak{U}$ with $2V\subseteq U$. Then $A_\alpha \neq\emptyset$, for each $\alpha\in C$. Define $f: C \to S$ by $f(\alpha) = \min(A_\alpha(0))$.   Then by Fodor's Lemma, there exist $\beta\in S$ and a stationary subset $S^\prime\subset S$ such that $\beta= f(\alpha)$, for all $\alpha\in S^\prime$. Hence $S^\prime\times\{1\}\subset V[\langle\beta,0\rangle]$. Thus, for every $\alpha,\gamma\in S'$, $\langle\gamma,1\rangle, \langle\alpha,1\rangle\in U$. Therefore, $(S^\prime\times\{1\})^2\subset U$.   \end{proof}

We are now ready to prove the result:
\begin{theorem}
    If the ladder system ${\mathcal L}$ is a $\clubsuit^*$ sequence, then $\Psi({\mathcal L})$ is not semi-proximal.
\end{theorem}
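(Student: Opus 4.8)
The plan is to show that for \emph{every} compatible uniformity $\mathbb U$ on $\Psi(\mathcal L)$, Player \textbf{II} has a winning strategy in the proximal game; by definition this is exactly what ``$\Psi(\mathcal L)$ is not semi-proximal'' means. As in the proofs of the previous section, I would fix such a $\mathbb U$, let $\mathbb B$ be its basis of open symmetric entourages, and describe a winning strategy $\sigma$ for \textbf{II} in the game played on $(\Psi(\mathcal L),\mathbb B)$ (restricting \textbf{I} to a base is harmless, as noted in the introduction, and I would spell this out at the end). The guiding idea is that \textbf{II} always answers with a point of the top level, i.e.\ of the form $\langle\alpha,1\rangle$, where $\alpha$ is chosen inside a $\subseteq$-decreasing sequence of stationary sets supplied by Lemma~\ref{unctble}(c).

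Concretely, I would build recursively, for each $\subseteq$-decreasing $(U_0,\dots,U_n)\in\mathbb B^{n+1}$, a stationary $T_n=T_n(U_0,\dots,U_n)\subseteq\Lim$ and an ordinal $\alpha_n$: apply Lemma~\ref{unctble}(c) to $S=\Lim$, $U=U_0$ to get stationary $T_0$ with $(T_0\times\{1\})^2\subseteq U_0$; given $T_n$, apply Lemma~\ref{unctble}(c) to $S=T_n$, $U=U_{n+1}$ to get stationary $T_{n+1}\subseteq T_n$ with $(T_{n+1}\times\{1\})^2\subseteq U_{n+1}$. Since each $T_n$ is uncountable, let $\alpha_n$ be the least element of $T_n\setminus\{\alpha_0,\dots,\alpha_{n-1}\}$ and set $\sigma(U_0,\dots,U_n)=\langle\alpha_n,1\rangle$ (on non-decreasing sequences $\sigma$ may be defined arbitrarily). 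This is legal: along a play following $\sigma$ we have $\alpha_{n+1}\in T_{n+1}\subseteq T_n$ and $\alpha_n\in T_n$, so $(x_n,x_{n+1})\in(T_n\times\{1\})^2\subseteq U_n$, i.e.\ $x_{n+1}\in U_n[x_n]$; and the $\alpha_n$ are pairwise distinct by construction.

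To verify that $\sigma$ wins, take any play $(U_0,x_0,U_1,x_1,\dots)$ following $\sigma$. First, $(x_n)_{n\in\omega}$ does not converge, being a non-eventually-constant sequence inside $\Lim\times\{1\}$, which is a closed discrete subspace of $\Psi(\mathcal L)$. Second, I claim $\bigcap_{n\in\omega}U_n[x_n]\neq\emptyset$. Fix $n$. Since $U_n\in\mathbb B$ is open, $U_n[x_n]$ is open; and since $\alpha_n\in T_n$ with $(T_n\times\{1\})^2\subseteq U_n$, we get $T_n\times\{1\}\subseteq U_n[x_n]$. Thus $U_n[x_n]$ is an open neighbourhood of the stationary set $T_n\times\{1\}$, so Lemma~\ref{unctble}(b) gives that $U_n[x_n]\cap(\omega_1\times\{0\})$ is co-countable. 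Hence $\bigcap_{n\in\omega}\bigl(U_n[x_n]\cap(\omega_1\times\{0\})\bigr)$ is a countable intersection of co-countable subsets of $\omega_1\times\{0\}$, so it is co-countable and in particular nonempty, and a fortiori $\bigcap_{n}U_n[x_n]\neq\emptyset$. Therefore \textbf{I} does not win the play, so $\sigma$ is winning for \textbf{II}.

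I do not expect a serious obstacle here: the only real point is the combination of Lemma~\ref{unctble}(c) (each $U_n[x_n]$ contains stationarily many top-level points) with Lemma~\ref{unctble}(b) (hence $U_n[x_n]$ is co-countable on the bottom level), after which the nonemptiness of the intersection is immediate. It is worth remarking \emph{why} one must descend to the bottom level: the naive attempt to locate the required point among the top-level points, namely in $\bigcap_n T_n$, can fail, since a decreasing $\omega$-sequence of stationary sets may have empty intersection. The remaining work is bookkeeping: arranging the nested $T_n$'s so that each $x_{n+1}$ is a legal response, keeping the $\alpha_n$'s distinct so the play cannot converge, and, at the very end, observing that a strategy for \textbf{II} in the game on $\mathbb B$ transfers to the game on $\mathbb U$ (when \textbf{I} plays $U_n\in\mathbb U$, \textbf{II} privately shrinks it to some $U_n'\in\mathbb B$ with $U_n'\subseteq U_n\cap U_{n-1}'$, applies $\sigma$, and uses $\emptyset\neq\bigcap_n U_n'[x_n]\subseteq\bigcap_n U_n[x_n]$).
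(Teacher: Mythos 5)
Your proposal is correct and follows essentially the same route as the paper's proof: a descending sequence of stationary sets from Lemma~\ref{unctble}(c) to keep Player \textbf{II}'s moves legal and non-convergent, combined with Lemma~\ref{unctble}(b) to show each $U_n[x_n]$ is co-countable on the bottom level, hence the intersection is nonempty. Your extra bookkeeping (explicitly keeping the $\alpha_n$ distinct and noting the transfer from the base $\mathbb B$ to $\mathbb U$) only tidies details the paper leaves implicit.
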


\begin{proof}
    Let ${\mathbb U}$ be any uniformity which generates the topology of $\Psi({\mathcal L})$. We will define a winning strategy $\sigma$ for Player II in the proximal game on $(\Psi({\mathcal L}),{\mathbb U})$. In inning $0$, Player I plays $U_0\in{\mathbb U}$. Then, by Lemma \ref{unctble} (c), there exists a stationary set $S_0\subset \omega_1$ such that $(S_0\times\{1\})^2\subset U$. Let $\sigma(U_0)\in S_0\times\{1\}$. In inning $n+1$, suppose that $\sigma(U_0,\ldots,U_n)$ has been defined so that there is a stationary set $S_n$ such that  $(S_n\times\{1\})^2\subset U_n$ and 
 $\sigma(U_0,\ldots,U_n) \in S_n\times\{1\}$.

Let $U_{n+1}\subset U_{n}$. Since $S_{n}$ is stationary, there exists a stationary set $S_{n+1}\subset S_{n}$ such that $(S_{n+1}\times\{1\})^2\subset U_{n+1}.$  Define $\sigma(U_0,\ldots,U_{n+1}) \in S_{n+1}\times\{1\}$. Note that  $(\sigma(U_0,\ldots,U_n) : n\in\omega)$ is not convergent in $\Psi({\mathcal L})$ as it is subset of a closed discrete subspace $\text{Lim}\times\{1\}$.  Since $S_{n}\times\{1\} \subset U_n[ \sigma(U_0,\ldots,U_n) ]$ for all $n$, then by Lemma \ref{unctble} (b),  $U_n[ \sigma(U_0,\ldots,U_n) ]\cap (\omega_1\times\{0\})$ is co-countable, for all $n$, thus $\bigcap_{n\in\omega} U_n[ \sigma(U_0,\ldots,U_n) ]\neq\emptyset$.
\end{proof}

We turn now to the question of whether $\Psi({\mathcal L})$ can be semi-proximal for some ladder system on $\omega_1$. Recall that ${\mathcal L}$ is uniformizable, if for each sequence of functions $f_\alpha:L_\alpha\rightarrow \omega$ there is a function $F:\omega_1\rightarrow \omega$ such that for all $\alpha$, $F|_{L_\alpha}=^* f_\alpha$ \cite{Shelah1980}.  Shelah introduced this notion and proved that not only does MA+$\neg$CH imply that all ladder systems are uniformizable but it is consistent with CH that all ladder systems are uniformizableENG. We proceed to show that every uniformizable ladder system is semi-proximal.

\begin{proposition}\label{unifladder}
    Every uniformizable ladder system is semi-proximal.
    \end{proposition}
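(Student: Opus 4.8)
The plan is to build an explicit compatible uniformity on $\Psi(\mathcal L)$ whose entourages come from countable clopen partitions of the discrete part $\omega_1\times\{0\}$, and then to show that in the proximal game over this uniformity Player \textbf{II} cannot have a winning strategy. The uniformizability hypothesis is exactly what lets us ``glue together'' a sequence of choices made by Player \textbf{I} at the limit points into a single global function $F:\omega_1\to\omega$, which is the combinatorial device that defeats any strategy $\sigma$ for \textbf{II}. Concretely, for a function $F:\omega_1\to\omega$ and $n\in\omega$ let $\mathcal P_{F,n}$ be the partition of $\omega_1\times\{0\}$ into the fibers $F^{-1}(j)\times\{0\}$ for $j<n$ together with $F^{-1}([n,\omega))\times\{0\}$, refined by also isolating finitely many points as needed so that each $(\alpha,1)$ sees a neighborhood inside one partition piece; the associated entourage is the union of the squares of the pieces (with $(\alpha,1)$ grouped with the tail of its ladder). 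One checks these generate a (totally bounded) uniformity compatible with the topology: the key point is that, since $L_\alpha$ has order type $\omega$ and $F|_{L_\alpha}$ is eventually constant on the ladder only when we arrange it, we instead want the basic neighborhoods of $(\alpha,1)$ to be recovered, which they are because for any finite $\xi<\alpha$ we can find $F$ and $n$ pushing $L_\alpha\setminus\xi$ into one piece.

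The heart is the strategy-stealing argument. Fix a putative winning strategy $\sigma$ for \textbf{II}. Player \textbf{I} will play partitions $\mathcal P_{F_k,k}$ for a carefully chosen increasing-in-precision sequence; the subtlety is that \textbf{I} must commit to $F_k$ at stage $k$ without knowing the future, so instead \textbf{I} plays a ``generic'' coarse partition at each finite stage and the functions $f_\alpha:L_\alpha\to\omega$ are assembled from the responses $x_k=\sigma(\vec U)$ of \textbf{II} along all the plays. More precisely, mimicking the bookkeeping in the proof of Lemma \ref{technicalMachinery}, one runs a tree of plays: at each limit ordinal $\alpha$ that arises as a value played by \textbf{II}, record on the ladder $L_\alpha$ the constraint that forces the eventual global $F$ to separate $\alpha$ from all previously-played limit points. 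Uniformizability yields $F:\omega_1\to\omega$ with $F|_{L_\alpha}=^* f_\alpha$ for every relevant $\alpha$. Then \textbf{I}, playing (finite modifications of) the partitions derived from $F$, produces a single run following $\sigma$ in which the points $x_k=(\alpha_k,1)$ played by \textbf{II} are pairwise distinct limit points lying in an eventually-thin nested sequence of clopen pieces, so $\bigcap_k U_k[x_k]\subseteq \omega_1\times\{0\}$ is forced to be empty because $F$ separates the $\alpha_k$ — and since the sequence is not eventually constant it does not converge, so \textbf{I} wins, contradicting that $\sigma$ was winning.

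I expect the main obstacle to be the non-causal nature of the construction: Player \textbf{I} needs the global function $F$ before playing, but $F$ is defined from \textbf{II}'s responses, which depend on \textbf{I}'s moves. The resolution, as in Lemma \ref{technicalMachinery}, is to decouple the two — first run the full tree of auxiliary plays using only the coarse plain strategy (or finite modifications thereof), harvest from them the family $(f_\alpha)$, apply uniformization, and only then exhibit the single bad run against $\sigma$ using the partitions coming from $F$; one must check that this last run genuinely follows $\sigma$, i.e. that the partitions \textbf{I} now plays were among those available as finite modifications in the auxiliary phase. A secondary technical point is ensuring at each stage that \textbf{II}'s response is forced into the ``$A$-type'' pieces rather than one of the finitely many isolated or proximal pieces $\mathcal B_{k,F}$; this is handled exactly as in Section 3 — if \textbf{II} ever plays into a (clopen, metrizable, hence proximal) piece, \textbf{I} switches to a winning strategy there and wins outright.
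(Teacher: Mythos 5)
Your proposal correctly identifies the two main ingredients --- a uniformity generated by clopen partitions coming from a uniformizing function $F$, and the machinery of Lemma \ref{technicalMachinery} --- but the engine you propose for defeating $\sigma$ does not work, and it is not the one the paper uses. The paper applies uniformization exactly once, up front, to a \emph{fixed} family $f_\alpha(\beta^n_\alpha)=x_\alpha|_n$ coming from a fixed injective enumeration $Z=\{x_\alpha:\alpha\in\Lim\}\subseteq 2^\omega$ chosen in advance to contain no copy of the Cantor set; the resulting $F:\omega_1\to 2^{<\omega}$ and the map $H$ (with $H((\alpha,1))=x_\alpha$ and $H((\beta,0))=F(\beta)$) determine the partitions $\mathcal A_{k,F}$ once and for all, and the entire task of defeating $\sigma$ is then delegated to Lemma \ref{technicalMachinery}. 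Your version instead harvests the functions $f_\alpha$ from Player \textbf{II}'s responses and uniformizes afterwards. This is circular: the entourages Player \textbf{I} plays in the ``final run'' are derived from $F$, hence differ from the coarse partitions played in the auxiliary phase, so $\sigma$'s responses in the final run need not be the harvested points at all, and the constraints you recorded on their ladders become irrelevant. Saying ``one must check that this last run genuinely follows $\sigma$'' names the obstacle but does not remove it.

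The second, independent gap is the winning condition. You claim the single final run has $\bigcap_k U_k[x_k]=\emptyset$ because $F$ separates the played ordinals $\alpha_k$. That only excludes the points $(\alpha_k,1)$ themselves from the intersection; a limit point $(\gamma,1)$ that \textbf{II} never played can survive, since its ladder may eventually track exactly the nested sequence of pieces selected by \textbf{II}'s responses. Player \textbf{I} cannot force emptiness in one run --- \textbf{II} controls, through her responses, which ``diagonal'' element of $2^\omega$ the nested pieces converge to. The actual argument must accept that each of the $2^\omega$ runs in the tree may leave a single survivor $y_f$ with $H(y_f)$ equal to that diagonal real, and derives the contradiction globally: $f\mapsto H(y_f)$ is a continuous injection of $2^\omega$ into $H[\Lim\times\{1\}]=Z$, contradicting the choice of $Z$ without a perfect subset. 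Your proposal never makes that choice of target set, and without it the argument cannot close. (A smaller issue: partitions into fibers of an arbitrary $F:\omega_1\to\omega$ are not clopen at the points $(\alpha,1)$ unless a tail of $L_\alpha$ lands in a single piece, which is precisely what the paper's choice $f_\alpha(\beta^n_\alpha)=x_\alpha|_n$ guarantees and arbitrary harvested constraints do not.)
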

    
    \begin{proof}

     Consider $Z = \{x_\alpha : \alpha\in \text{Lim}\}\subseteq 2^\omega$ enumerated injectively, and define $f_\alpha: L_\alpha\to 2^{<\omega}$ by $f_\alpha(\beta_\alpha^n) = x_\alpha|_n\in 2^n$, where $\beta_\alpha^n$ is the $n$-th element of $L_\alpha$. Since ${\mathcal L}$ is uniformizable, there exists $F: \omega_1\to 2^{<\omega}$ such that $F|_{ L_\alpha}=^*f_\alpha$. 
    Define $H:\Psi({\mathcal L})\rightarrow 2^{\leq \omega}$ by $H((\alpha,0))=F(\alpha)$ and $H((\alpha,1))=x_\alpha$.     Let $X=\Psi(\mathcal L)$.      For each $i \in 2$ and $k \in \omega$, let $$A_k^i=\{x \in X: |H(x)|\geq (k+1) \andd H(x)(k)=i\}.$$ Note that each $A_k^i$ is clopen. Now, for each finite $F \subseteq \Lim$ and $k \in \omega$, fix $G_{F,k}\subseteq \omega_1$ finite so that:
    \begin{itemize}
        \item For every $\alpha \in F$, $|H(L_\alpha\setminus G_{F,k})|=f_\alpha(L_\alpha\setminus G_{F,k})$.
        \item $\mathcal B_{F, k}=\{\{(\alpha, 1)\}\cup (L_\alpha\setminus G_{F,k})\times \{0\}: \alpha \in F\}\cup\{\{x\}:x \in X,\, |H(x)|\leq n\}\cup \{\{x\}:x \in G_{F, k}\}$ is pairwise disjoint - for clarity, $\mathcal B_{k,\emptyset}=\emptyset$.
    \item For every $\alpha \in F$, $f_\alpha[L_\alpha\setminus G_{F,k}]\cap 2^{\leq n}=\emptyset$.
    \end{itemize}

    Let $A_{k,F}^i = A_k^i\setminus \bigcup \mathcal B_{k,F}$ and define $\mathcal A_{k,F} =\{A_{k,F}^i: i<2\}\cup \mathcal B_{k,F}$. Note that, each $\mathcal A_{k,F}$ is a clopen partition of $X$. Let $\mathbb U$ be the uniformity generated by this family of clopen partitions. This uniformity is easily seen to generate the topology of $\Psi(\mathcal L)$. If we choose $Z$ to have no copy of the Cantor Set, Lemma \ref{technicalMachinery} shows that $\mathbb U$ is semi-proximal.
    
\end{proof}

\begin{corollary}
    The existence of a non semi-proximal ladder system is independent of ZFC.
\end{corollary}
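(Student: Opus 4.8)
The plan is to assemble the corollary directly from the two main results of Section 4, together with the classical independence facts about uniformizability already recalled in the text. The statement to prove is that "there exists a non-semi-proximal ladder system" is neither provable nor refutable in ZFC, so I would exhibit one model where such a ladder system exists and one model where none exists.

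First I would argue the consistency of existence. Recall that $\clubsuit^*$ follows from $\diamondsuit^*$, which holds in the constructible universe $\mathbf L$; in particular $\clubsuit^*$ is consistent with ZFC, so there is a model containing a $\clubsuit^*$-sequence $\mathcal L$. By the Theorem immediately preceding the corollary, if $\mathcal L$ is a $\clubsuit^*$-sequence then $\Psi(\mathcal L)$ is not semi-proximal, i.e. $\mathcal L$ is not semi-proximal in our shorthand. Hence in that model a non-semi-proximal ladder system exists, establishing $\mathrm{Con}(\mathrm{ZFC})\Rightarrow\mathrm{Con}(\mathrm{ZFC}+\exists\text{ non-semi-proximal ladder system})$.

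Next I would argue the consistency of non-existence. By Shelah's theorem, recalled in the paragraph before Proposition \ref{unifladder}, it is consistent with ZFC (for instance under $\mathrm{MA}+\neg\mathrm{CH}$, and even consistently with CH) that every ladder system on $\omega_1$ is uniformizable. Fix such a model. By Proposition \ref{unifladder}, every uniformizable ladder system is semi-proximal; therefore in that model every ladder system is semi-proximal, so there is no non-semi-proximal ladder system. This gives $\mathrm{Con}(\mathrm{ZFC})\Rightarrow\mathrm{Con}(\mathrm{ZFC}+\neg\exists\text{ non-semi-proximal ladder system})$. Combining the two paragraphs yields the claimed independence.

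There is essentially no obstacle here — the corollary is a bookkeeping combination of the Theorem, Proposition \ref{unifladder}, and the cited set-theoretic facts. The only point requiring a word of care is making sure the relevant hypotheses are genuinely consistent with ZFC and not merely consistent relative to large cardinals: both $\diamondsuit^*$ (hence $\clubsuit^*$) holding in $\mathbf L$ and "all ladder systems uniformizable" (via forcing over a model of ZFC) have consistency strength exactly that of ZFC, so no extra assumptions are smuggled in. I would phrase the proof in two or three sentences, citing \cite{Shelah1980} for the uniformization side and pointing back to the Theorem and Proposition \ref{unifladder} for the topological content.
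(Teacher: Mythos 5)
Your proposal is correct and is exactly the argument the paper intends: the corollary follows immediately by combining the Theorem on $\clubsuit^*$-sequences (consistent since $\clubsuit^*$ follows from $\diamondsuit^*$, which holds in $\mathbf{L}$) with Proposition \ref{unifladder} together with Shelah's consistency result that all ladder systems are uniformizable. The paper leaves this bookkeeping implicit, and your write-up fills it in faithfully with no gaps.
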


In Section 2, we provided a sufficient condition for the semi-proximality of $\mathbb R$-embeddable almost disjoint families. Note that the definition of ${\mathbb R}$-embeddable given in section 2 has a natural formulation for ladder systems:

\begin{definition} A ladder system $\mathcal L$ is ${\mathbb R}$-embeddable if there is a continuous $f:\Psi({\mathcal L})\rightarrow {\mathbb R}$ such that $f\upharpoonright \mathcal{L}$ is injective. 

\end{definition}

The first lines of Proposition \ref{unifladder} shows the following:

\begin{proposition}
If ${\mathcal L}$ is uniformizable, then ${\mathcal L}$ is $\mathbb{R}$-embeddable. 
\end{proposition}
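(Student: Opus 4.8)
The statement to prove is: if $\mathcal{L}$ is uniformizable, then $\mathcal{L}$ is $\mathbb{R}$-embeddable. The remark immediately preceding it tells us that the opening of the proof of Proposition \ref{unifladder} already contains the needed construction, so the plan is simply to isolate that part and check it gives a bona fide $\mathbb{R}$-embedding. Concretely, I would start by fixing \emph{any} injective enumeration $Z = \{x_\alpha : \alpha \in \Lim\} \subseteq 2^\omega$ (here we do \emph{not} need $Z$ to omit a Cantor set — injectivity is all that matters for embeddability), and for each limit $\alpha$ define $f_\alpha : L_\alpha \to 2^{<\omega}$ by $f_\alpha(\beta_\alpha^n) = x_\alpha \restriction n$, where $\beta_\alpha^n$ is the $n$-th element of $L_\alpha$. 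By uniformizability there is $F : \omega_1 \to 2^{<\omega}$ with $F \restriction L_\alpha =^* f_\alpha$ for every $\alpha$.

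\textbf{Key steps.} Define $H : \Psi(\mathcal{L}) \to 2^{\leq \omega}$ by $H((\alpha,0)) = F(\alpha)$ and $H((\alpha,1)) = x_\alpha$, exactly as in Proposition \ref{unifladder}. First I would check continuity of $H$: the base space $\omega_1 \times \{0\}$ is discrete so there is nothing to check there, and at a point $(\alpha,1)$ a basic neighborhood is $\{(\alpha,1)\} \cup (L_\alpha \setminus \xi)\times\{0\}$; since $F \restriction L_\alpha =^* f_\alpha$ and $f_\alpha(\beta_\alpha^n) = x_\alpha\restriction n \to x_\alpha$ in $2^{\leq\omega}$ (with the usual topology on $2^{\leq\omega}$ as a subspace of, say, $\{0,1,2\}^\omega$ via $s \mapsto s^\frown\langle 2\rangle^\frown\cdots$ for finite $s$), for every neighborhood $W$ of $x_\alpha$ all but finitely many $\beta \in L_\alpha$ have $F(\beta) = f_\alpha(\beta) \in W$; hence $H$ is continuous at $(\alpha,1)$. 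Next, $H\restriction \mathcal{L}$ — i.e., $H$ restricted to $\Lim\times\{1\}$ — is injective precisely because $\alpha \mapsto x_\alpha$ was chosen injective. Finally, to land in $\mathbb{R}$ rather than $2^{\leq\omega}$, I would compose with any embedding of $2^{\leq\omega}$ into $\mathbb{R}$ (e.g. send a finite string or infinite sequence to the real whose base-$4$ expansion it codes, appending a terminating digit for finite strings), which is continuous and injective; or simply invoke the equivalence (cited in the excerpt via \cite{Hrusak2014}, \cite{guzman2021mathbb}) that $\mathbb{R}$ may be replaced by $2^\omega$ in the definition of $\mathbb{R}$-embeddability, noting $2^{\leq\omega}$ embeds in $2^\omega$. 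Either way we obtain a continuous $g : \Psi(\mathcal{L}) \to \mathbb{R}$ with $g\restriction\mathcal{L}$ injective, as required.

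\textbf{Main obstacle.} There is essentially no obstacle: the content has already been done inside Proposition \ref{unifladder}, and the only care needed is bookkeeping — making sure the topology on the target $2^{\leq\omega}$ is the right one so that $F\restriction L_\alpha =^* f_\alpha$ genuinely forces convergence $H((\beta,0)) \to H((\alpha,1))$ along $L_\alpha$, and observing that the requirement "$Z$ omits a Cantor set" from Proposition \ref{unifladder} plays no role here and should be dropped. So the proof will be a two- or three-line pointer: "Argue as in the first paragraph of the proof of Proposition \ref{unifladder}, taking $Z$ to be an arbitrary injectively-enumerated subset of $2^\omega$; the map $H$ constructed there is continuous and injective on $\Lim\times\{1\}$, and composing with an embedding of $2^{\leq\omega}$ into $\mathbb{R}$ yields the desired $f$."
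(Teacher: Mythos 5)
Your proposal is correct and follows exactly the route the paper takes: the paper's entire ``proof'' is the remark that the first lines of the proof of Proposition \ref{unifladder} (the construction of $H$ from the uniformizing function $F$) already give the embedding, and you have simply filled in the routine details (continuity at points of $\Lim\times\{1\}$, injectivity from the injective enumeration of $Z$, and composing with an embedding of $2^{\leq\omega}$ into $\mathbb{R}$), correctly observing that the ``no Cantor set'' hypothesis on $Z$ is not needed here.
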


We conjecture that if $\mathcal L$ is a ladder system and there is an $H:\Psi(\mathcal L)\rightarrow X$ witnessesing ${\mathbb R}$-embeddability so that the range of the $H|_{\Lim\times \{1\}}$ contains no copy of the Cantor set, then $\mathcal L$ is semi-proximal. Proposition \ref{unifladder} showed that if the embedding is particularly nice then it is semi-proximal. Also by Proposition \ref{unifladder} we can note the converse is at least consistently not true: It is consistent with CH that all ladder systems on $\omega_1$ are uniformizable and moreover the proof showed that it can be mapped onto any family of $\omega_1$ many branches. So it is consistent with CH that every ladder system is semi-proximal and can embedded onto the entire family of branches in $2^\omega$. This observation should be compared to Problem \ref{2.6} which asked if there is any almost disjoint with this property. 

\begin{problem}
    Is $\mathcal L$-semiproximal whenever $\Psi(\mathcal L)$ is $\mathbb R$-embeddable?
\end{problem}

\section*{Acknowledgements}

The second author is supported by FAPESP (Fundação de Amparo à Pesquisa do Estado de São Paulo), process number 2023/17856-7.

The third author is supported by NSERC grant 53097. 

\addcontentsline{toc}{section}{References}

\end{document}